\theoremstyle{plain}
\newtheorem{theorem}{Theorem}[section]
\newtheorem{lemma}[theorem]{Lemma}
\newtheorem{proposition}[theorem]{Proposition}
\theoremstyle{remark}
\newtheorem{Definition}[theorem]{Definition}
\newtheorem{Assumption}[theorem]{Assumption}
\newtheorem{remark}[theorem]{Remark}
\numberwithin{equation}{section}
\title[ Propagation of chaos with $L^p$ interactions]{Entropic propagation of chaos for mean field diffusion with $L^p$ interactions via hierarchy, linear growth and fractional noise}
\author{Yi Han}
\address{Department of Pure Mathematics and Mathematical Statistics, University of Cambridge.}
\email{yh482@cam.ac.uk}
\thanks{Supported by EPSRC grant EP/W524141/1.}
\begin{document}

\maketitle

\begin{abstract} 
New quantitative propagation of chaos results for mean field diffusions are proved via local and global entropy estimates. In the first result we work on the torus and consider singular, divergence free interactions $K\in L^p$, $p>d$. We prove an $O(k^{2}/n^2)$ convergence rate in relative entropy between the $k$-th marginal laws of the particle system and its limiting law at each time $t$, as long as the same holds at time 0. The proof is based on local estimates via a form of BBGKY hierarchy and exemplifies a method to extend the framework in Lacker \cite{lacker2021hierarchies} to singular interactions. The rate can be made uniform in time combined with the result in \cite{lacker2022sharp}.

Then we prove quantitative propagation of chaos for interactions that are only assumed to have linear growth. This generalizes to the case where the driving noise is replaced by a fractional Brownian motion $B^H$, for all $H\in(0,1)$. These proofs follow from global estimates and subGaussian concentration inequalities. We obtain $O(k/n)$ convergence rate in relative entropy in each case, yet the rate is only valid on $[0,T^*]$ with $T^*$ a fixed finite constant depending on various parameters of the system.
\end{abstract}

\section{Introduction}

In this paper, we study the convergence of the interacting particle system
\begin{equation}\label{nparticlesystem}d X_{t}^{ i}=\left(\frac{1}{n-1} \sum_{j \neq i} b\left(t,X_{t}^{i}, X_{t}^{ j}\right)\right) d t+d W_{t}^{i},\quad i=1\cdots n,\end{equation}
in the $n\to\infty$ limit, to the corresponding McKean-Vlasov SDE
\begin{equation}\label{mckeanvlasov}
d X_{t}=\left(\langle\mu_t, b(t, X_t, \cdot)\rangle\right) d t+d W_t, \quad \mu_t=\operatorname{Law}(X_t),
\end{equation} 
where $W^1,\cdots,W^n$ are $n$ independent $d$-dimensional Brownian motions. 

There has been vast literature on the quantitative convergence of  \eqref{nparticlesystem} towards \eqref{mckeanvlasov}, even in the general case of irregular interactions $b$. In this paper we address the following three questions:
\begin{enumerate}
    \item 
 In a very recent paper, Lacker \cite{lacker2021hierarchies} developed a form of BBGKY hierarchy, from which he proved that the $k$-th marginals of \eqref{nparticlesystem} converge to the $k$-fold product of \eqref{mckeanvlasov} with a rate $O(k^2/n^2)$ measured in terms of relative entropy, vastly improving the $O(k/n)$ rate which was previously assumed to be optimal. The technique in \cite{lacker2021hierarchies} is restricted to interactions $b$ that are locally bounded, which falls short of many interesting physical cases like the Biot-Savart kernel in 2D incompressible Navier-Stokes equation. We extend his framework to some singular interactions on the torus and get the same $O(k^2/n^2)$ rate in relative entropy. Moreover, the rate can be made uniform in time. (Another recent work \cite{bresch2022new} used PDE techniques and weighted $L^p$ spaces to set up the BBGKY hierarchy, see Section \ref{comparisons} for a comparison.)

\item For the remaining results we work on $\mathbb{R}^d$. We consider interactions $b$ only assumed to have linear growth: $|b(t,x,y)|\leq K(1+|x|+|y|)$ for some $K>0$. We can still prove a quantitative propagation of chaos result with rate $O(k/n)$ in relative entropy under such generality, but only valid on $[0,T^*]$ with $T^*$ a predetermined constant.

\item We replace the driving Brownian motion $W$ by a fractional Brownian motion $B^H$.  Quantitative propagation of chaos results (on short time $[0,T^*]$) are proved with the same $O(k/n)$ rate for all $H\in(0,1)$, and the (locally bounded) interactions only need to have minimal regularity such that the particle systems are well-posed.
\end{enumerate}

The following table lists some representative literature in the study of propagation of chaos for interacting diffusion. It serves as a road map and indicates the position of our findings among the literature. The $O(k^2/n^2)$ and $O(k/n)$ rates are in terms of relative entropy, which implies, by Pinsker's inequality, the $O(k/n)$ and $O((k/n)^{1/2})$ rate in total variation. The enumeration is by no means exhaustive, and we refer to \cite{chaintron2021propagation} for a rather complete literature review. 
\begin{center}
\begin{tabular}{|c|c|}%
\hline  
Uniform in time propagation of chaos &\cite{malrieu2003convergence}, \cite{lacker2022sharp},\cite{guillin2021uniform}, \cite{rosenzweig2021global}, Section \ref{uniformsec}.\\
\hline  
$O(k^2/n^2)$ rate on any finite interval $[0,T]$ & Condition \eqref{1.2} in \cite{lacker2021hierarchies}; Theorem \ref{integrablccase}.\\
\hline 
$O(k/n)$ rate on any finite interval $[0,T]$& Lipschitz case:\cite{10.1007/BFb0085169}; singular case:\cite{jabin2018quantitative}.\\
\hline
$O(k/n)$ rate on fixed short time $[0,T^*]$&Theorem \ref{theorem01},  \ref{theorem02}, \ref{theorem2}. \\
\hline 
Qualitative convergence without a rate & See for example \cite{tomasevic:hal-03086253}, \cite{hao2022strong}, \cite{lacker2018strong}.
\\
\hline
Well-posedness of the limiting equation &All the above. See for example \cite{han2022solving}.\\ 
\hline
\end{tabular}
\end{center}

We briefly explain the philosophy behind this table. First, irregularity of the interactions $b$ can be tackled with well behaved initial conditions, global stability property of the system, and compactness of domains. This is the heart of Theorem \ref{integrablccase}, a generalization of \cite{lacker2018strong} to singular interactions. Second, quantitative propagation of chaos on any finite interval $[0,T]$ is notably harder than that on short time $[0,T^*]$, as some martingale structure could be necessary. If one only considers sufficiently short time $[0,T^*]$, then very little information on the system is enough: some Gaussian concentration phenomenon is sufficient. This allows us to consider the minimal assumption on interaction $b$ (linear growth condition, Theorem \ref{theorem01}), and replace the Brownian motion by its fractional counterpart, see Theorem \ref{theorem02}, \ref{theorem2}. We refer to footnote \ref{footnote123} for relevant issues.

We fix some notations throughout the article.
For a polish space $E$ denote by $\mathcal{P}(E)$ the space of probability measures on $E$.

The $d$-dimensional 1-periodic torus $\mathbb{T}^d$ is identified with $[-\frac{1}{2},\frac{1}{2}]^d$ with periodic boundary condition.

For each $t\geq 0$, denote by
$\mathcal{C}_{t}^{d}:=C\left([0, t] ; \mathbb{R}^{d}\right)$  the space of continuous, $\mathbb{R}^d$-valued paths with the supremum norm
$$
\|x\|_{t}:=\sup _{s \in[0, t]}\left|x_{s}\right|.
$$
For a probability measure $\mu\in \mathcal{P}(\mathcal{C}_T^d)$ and any $0\leq t\leq T$, denote by $\mu[t]$ the projection of $\mu$ onto $\mathcal{C}_t^d$, via the restriction $x\mapsto x|_{[0,t]}$. Also denote by $\mu_t$ the time-$t$ marginal of $\mu$.

$H\left(\cdot\mid\cdot\right)$ denotes the relative entropy on any measurable space, defined as
\begin{equation}\label{relativeentropy}H\left(\nu\mid\mu\right):=\int\frac{d\nu}{d\mu}\log \frac{d\nu}{d\mu}d\mu,\end{equation}
with the convention that $H\left(\nu\mid\mu\right)=\infty$ if $\nu$ is not absolutely continuous with respect to $\mu$.

\subsection{BBGKY hierarchy for singular interactions} This section exemplifies a generalization of the framework in Lacker \cite{lacker2021hierarchies} to singular interactions. The model we consider is inspired by \cite{guillin2021uniform}.

For any smooth manifold $\chi$ denote by $\mathcal{C}^\infty(\chi)$ the space of real valued smooth functions on $\chi$, and for any $\lambda>1$ denote by $\mathcal{C}_\lambda^\infty(\chi)$ the functions $f\in \mathcal{C}^\infty(\chi)$ such that $0<\frac{1}{\lambda}\leq f\leq\lambda<\infty$.

\begin{Assumption}\label{assumption1}
We consider $K:\mathbb{T}^d\to\mathbb{R}^d$ satisfying the following conditions:
\begin{itemize}
    \item  $\|K\|_{L^p}<\infty$ for some $p>d$,
    \item $\nabla\cdot K=0$ in the sense of distributions.
    \end{itemize}
\end{Assumption}

Under assumption \ref{assumption1}, strong well-posedness of the interacting particle system \eqref{thm1nparticlesystem} can be found in \cite{hao2022strong}. That of the McKean-Vlasov SDE \eqref{thm1mckeanvlasov} can be found in \cite{rockner2021well} or \cite{han2022solving}

\begin{theorem}\label{integrablccase}
Fix $K$ satisfying Assumption \ref{assumption1}.
Consider the interacting particle system \begin{equation}\label{thm1nparticlesystem}d X_{t}^{ i}=\left(\frac{1}{n-1} \sum_{j \neq i} K\left(X_{t}^{ i}-X_{t}^{ j}\right)\right) d t+d W_{t}^{i},\quad i=1\cdots n,\end{equation} and the McKean-Vlasov SDE \begin{equation}\label{thm1mckeanvlasov}
d X_{t}=\left(\langle\mu_t, K( X_t- \cdot)\rangle\right) d t+d W_t, \quad \mu_t=\operatorname{Law}(X_t).
\end{equation} 
Assume the following conditions are satisfied: for some $\lambda>1$,
\begin{enumerate}
    \item The initial law $P_0^{(n,n)}$ of \ref{thm1nparticlesystem} is exchangeable. 
    Denote by $P_0^{(n,k)}$ its $k$-th marginal.
    \item \label{condition2} $P_0^{(n,n)}$ has a density $\rho_n(0)$ such that $\rho_n(0,\cdot)\in\mathcal{C}_\lambda^\infty((\mathbb{T}^d)^n)$, uniform in $n$.
    \item \label{condition3} The initial law $\mu_0$ of \eqref{thm1mckeanvlasov} has a density $\bar{\rho}_0\in\mathcal{C}_\lambda^\infty(\mathbb{T}^d)$. 
    
\item \label{condition4}The chaotic initial condition: there exists some $C_0>0$ such that  $$H\left(P_0^{(n,k)}\mid \mu_0^{\otimes k}\right)\leqslant C_0\frac{k^2}{n^2},\quad k=1,\cdots,n.$$
\end{enumerate}

 Then by weak uniqueness, the joint law $P_t^{(n,n)}$ of $(X_t^{1},\cdots,X_t^{n})$ is exchangeable. Denote by $P_t^{(n,k)}$ its $k$-th marginal, then we have the following propagation of chaos result: 
 
 There exists constants $\gamma$, $M$ and $C$ depending on $\lambda$, $K$ and $T$ such that, if $n\geq 6e^{\gamma T}$, then for all $t\in[0,T]$ and $k=1,\cdots,n$,
\begin{equation}
    H\left(P_t^{(n,k)}\mid \mu_t^{\otimes k}\right)\leq 2C\frac{k^2}{n^2}+C\exp(-2n(e^{-\gamma T}-\frac{k}{n})_+^2).
\end{equation}

\end{theorem}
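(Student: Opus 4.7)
The plan is to extend Lacker's BBGKY hierarchy argument \cite{lacker2021hierarchies} to the singular $L^p$ setting, with the key technical substitution being: Lacker's assumption of a locally bounded drift is replaced by uniform two-sided bounds on the densities $\rho_n(t,\cdot)$ and $\bar\rho_t$. These bounds, combined with the $L^p$ integrability of $K$, allow one to control the cross terms in the hierarchy by $L^{p/2}$--$L^\infty$ duality rather than by $L^\infty$ bounds on the drift itself.

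First I would prove a density-regularity propagation lemma. The Fokker--Planck equation for $\rho_n$ reads
\begin{equation*}
\partial_t\rho_n \;=\; \Delta\rho_n - \sum_{i=1}^n \nabla_{x^i}\cdot\Bigl(\rho_n\cdot \tfrac{1}{n-1}\sum_{j\neq i} K(x^i-x^j)\Bigr).
\end{equation*}
Since $\nabla\cdot K=0$ distributionally, each drift $\tfrac{1}{n-1}\sum_{j\neq i}K(x^i-x^j)$ is divergence-free in $x^i$, and the equation reduces to a transport-diffusion $\partial_t\rho_n = \Delta\rho_n - b_n\cdot\nabla\rho_n$ with solenoidal $b_n$. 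A parabolic maximum principle on the torus (justified by mollifying $K$ and passing to the limit using the well-posedness from \cite{hao2022strong}) then propagates the bounds $1/\lambda\le\rho_n(0,\cdot)\le\lambda$ forward in time, uniformly in $n$; the same argument applied to the nonlinear Fokker--Planck equation of $\mu_t$ gives $1/\lambda\le\bar\rho_t\le\lambda$ for all $t\ge 0$.

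Next I would execute the hierarchy differential inequality. Following \cite{lacker2021hierarchies}, $H_k(t):=H(P_t^{(n,k)}\mid\mu_t^{\otimes k})$ satisfies, after integration by parts against the Fokker--Planck equations for $P_t^{(n,k)}$ and $\mu_t^{\otimes k}$, an inequality of the form $\dot H_k \le -\tfrac12 I_k + R_k$, where $I_k$ is the relative Fisher information of $P_t^{(n,k)}$ against $\mu_t^{\otimes k}$ and $R_k$ is a cross term of schematic form $\sum_{i\le k}\int|\Delta_i K|^2\,dP_t^{(n,k+1)}$, with $\Delta_i K$ a difference between empirical and mean-field averages of $K(x^i-\cdot)$. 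In Lacker's setting $R_k$ is absorbed using $\|b\|_\infty^2$; here I would instead write $R_k \le C(\lambda)\|K\|_{L^p}^2$ by Hölder's inequality with conjugate exponents $p/2$ and $(p/2)^*$, using the $L^\infty$ density bound from step one. A subsequent Cauchy--Schwarz/Pinsker step recouples the residual to $H_{k+1}$ with the correct multiplicative scaling, and iterating the resulting system telescopically as in \cite{lacker2021hierarchies} produces the $O(k^2/n^2)$ bound in the range $k\le n e^{-\gamma T}$, given the chaotic initial condition.

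The hardest step is arranging this cross-term estimate so that the quadratic gain survives: Lacker's $k^2/n^2$ scaling depends on a delicate cancellation at each level of the hierarchy, and I must verify that the density-duality substitution (which inserts $C(\lambda)\|K\|_{L^p}^2$ in place of $\|b\|_\infty^2$) preserves this cancellation rather than degrading the rate to $O(k/n)$. For the remaining range $k > n e^{-\gamma T}$, where the hierarchy no longer yields the quadratic gain, I would fall back on the global entropy bound $H(P_t^{(n,n)}\mid\mu_t^{\otimes n})\le Cn$ obtained by a Jabin--Wang-type Gaussian moment estimate (again using the $L^\infty$ density control of step one to handle $K\in L^p$), and combine it with the sub-Gaussian tail inequality for relative entropies of exchangeable marginals from \cite{lacker2021hierarchies}, which accounts for the additive correction $C\exp(-2n(e^{-\gamma T}-k/n)_+^2)$.
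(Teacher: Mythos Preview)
Your density-propagation step (transport--diffusion with solenoidal drift, maximum principle, mollification) matches the paper, and the overall plan of feeding the resulting $L^\infty$ density control into Lacker's hierarchy is correct. But the crucial linking step is underspecified in a way that hides the main difficulty.

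In the hierarchy the cross term splits into two pieces (see equation \eqref{aligned} in the paper): (i) a sum over pairs $j\le k$, which your H\"older/duality argument bounds by $M\,k(k-1)^2/(n-1)^2$; and (ii) a conditional-expectation term
\[
\Bigl|\,\mathbb{E}\bigl[K(X_t^1-X_t^n)\mid X_t^1,\ldots,X_t^k\bigr]-\langle\mu_t,K(X_t^1-\cdot)\rangle\,\Bigr|^2,
\]
which must be bounded by $\gamma\, H\bigl(\rho_n^{X_t^1,\ldots,X_t^k}(t,\cdot)\,\big|\,\bar\rho(t,\cdot)\bigr)$ in order to produce the recursion to $H_t^{k+1}-H_t^k$ via the chain rule. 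Your phrase ``a subsequent Cauchy--Schwarz/Pinsker step recouples the residual to $H_{k+1}$'' glosses over precisely this bound. Ordinary Pinsker needs $K$ bounded; the weighted Pinsker used in \cite{lacker2021hierarchies} needs $\int e^{c|K|^2}\,d\bar\rho<\infty$, which fails for $K\in L^p$. The paper's device is the Bolley--Villani variant \eqref{reducedpinsker}: $\bigl(\int\varphi\,d|\mu-\nu|\bigr)^2\le 4C\,H(\mu\mid\nu)$ with $C=\tfrac16\int\varphi^2\,d\mu+\tfrac13\int\varphi^2\,d\nu$. This requires only $\int K^2$ under \emph{both} the limiting and the conditional law, and the latter is exactly where the pointwise bound $1/\lambda^2\le\rho_n^{x_1,\ldots,x_k}\le\lambda^2$ on the conditional density is consumed. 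Without naming this inequality and the conditional-density bound feeding it, your sketch does not close.

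A related point you omit: the paper works with the \emph{time-$t$ marginal} entropies $H_t^k=H(\rho_n^k(t,\cdot)\mid\bar\rho_t^{\otimes k})$, not path-space entropies, and conditions on $(X_t^1,\ldots,X_t^k)$ rather than on the paths $(X^1,\ldots,X^k)_{[0,t]}$. This is deliberate: the time-$t$ conditional density is a ratio of bounded densities and hence controllable, whereas the path-conditional law $P^{(k+1\mid k)}_{X^1,\ldots,X^k}$ is not the law of an SDE and has no usable density estimate.

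Finally, the additive correction $C\exp\bigl(-2n(e^{-\gamma T}-k/n)_+^2\bigr)$ is not obtained by a separate global-entropy argument as you propose; it falls out directly from Lacker's solution of the hierarchy $\dot H_t^k\le Mk(k-1)^2/(n-1)^2+\gamma k(H_t^{k+1}-H_t^k)$ together with the terminal bound $H_t^n\le H_0^n+\tfrac12 nMt$, applied uniformly over all $k$.
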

By Pinsker's inequality this implies a $O\left(\frac{1}{n}\right)$ rate in total variation:
$$\|\mu_t^{\otimes k}-P_t^{(n,k)}\|_{TV}=O\left(\frac{k}{n}\right),\quad n\to\infty,\frac{k}{n}\to 0,$$
where $\|\cdot\|_{TV}$ denotes the total variation distance.

This theorem is proved in Section \ref{section22}.

\subsubsection{Uniform in time estimates}\label{uniformsec}

By the assumptions in Theorem \ref{integrablccase}, the density function $\bar{\rho}(t,\cdot)$ of the McKean-Vlasov SDE \eqref{thm1mckeanvlasov} satisfies $\frac{1}{\lambda}\leq \bar{\rho}(t,\cdot)\leq\lambda$ for all $t\geq 0$, see Proposition \ref{esttrans1}. This implies a Logarithmic Sobolev inequality for $\bar{\rho}(t,\cdot)$ that is uniform in $t\in(0,\infty)$: there exists some constant $C>0$ such that for all $f\in \mathcal{C}_{>0}^\infty(\mathbb{T}^d)$, 
$$\operatorname{Ent}_{\bar{\rho}(t,\cdot)}(f)\leq C \int_{\mathbb{T}^d}\frac{|\nabla f|^2}{f}d\bar{\rho}(t,\cdot).$$

Moreover, the Log-Sobolev inequality tensorizes, with the same constant $C$ for any $\bar{\rho}^{\otimes k}(t,\cdot)$ and $k\geq 1$. The proof of these facts can be found in Lemma 3 and Theorem 3 of \cite{guillin2021uniform}. Via the Log-Sobolev inequality, Equation \eqref{differentialinequality12}, which is the main iterative inequality we want to solve, boils down to 
\begin{equation}
    \frac{d}{dt}H_t^k\leq -CH_t^k+ \frac{k(k-1)^2}{(n-1)^2}+\gamma k(H_t^{k+1}-H_t^k).
\end{equation}

Now we can invoke the very recent computational results in Theorem 2.1 of \cite{lacker2022sharp} and obtain a $O(k^2/n^2)$ rate that is uniform in $t\in(0,\infty)$, under an extra smallness condition of the interaction $K$. Note however that this smallness condition is not required in \cite{guillin2021uniform}, which gives the $O(k/n)$ rate.

\subsubsection{Applications to the Biot-Savart kernel}
The Biot-Savart kernel on $\mathbb{R}^2$ is given by $$\widetilde{K}(x)=\frac{1}{2\pi}\frac{x^\perp}{|x|^2},$$
where for $x=(x_1,x_2)$, $  x^\perp=(x_2,-x_1)$. The Biot-Savart kernel appears in the 2D incompressible fluid dynamics and is one of the central models studied in the propagation of chaos literature ( \cite{jabin2018quantitative} and \cite{guillin2021uniform}). On the torus $\mathbb{T}^2$, $\widetilde{K}$ admits a version (see equation (1.6) of \cite{guillin2021uniform})
\begin{equation}\label{biotsavart}
K(x)=\frac{1}{2\pi}\frac{x^\perp}{|x|^2}+\frac{1}{2\pi}\sum_{k\in\mathbb{Z}^2,k\neq 0}\frac{(x-k)^\perp}{|x-k|^2},\end{equation}
that satisfies $\nabla\cdot K=0$ in the sense of distribution. The interaction $K$ is not covered by Assumption \ref{assumption1} in this article, yet it is covered by the following more general assumption (Assumption 2 in \cite{guillin2021uniform}), the last condition of which is justified by Proposition 2 of \cite{jabin2018quantitative}:

\begin{Assumption}\label{assumption2} We impose the following more general condition on $K$:
\begin{itemize}
    \item $\|K\|_{L^1}<\infty.$
    \item $\nabla\cdot K=0$ in the sense of distributions.
    \item There exists $V\in L^\infty$ such that $K=\nabla\cdot V$.
\end{itemize}
\end{Assumption}

Elementary computation shows the Biot-Savart kernel $K$ given in \eqref{biotsavart} satisfies  $K\in L^p(\mathbb{T}^2)$ for $p\in[1,2)$ but $K\notin L^2(\mathbb{T}^2)$. Taking the Biot-Savart kernel $K$ into the proof of Theorem \ref{integrablccase}, the proof remains formally correct despite two technical issues:
\begin{enumerate}
    \item The well-posedness of \eqref{thm1nparticlesystem} as a stochastic particle system is unknown. We may replace the SDEs by the corresponding Fokker-Planck PDEs. The well-posedness of the so-called \textit{entropy solutions} of Fokker-Planck PDEs for the Biot-Savart kernel $K$ are proven in \cite{jabin2018quantitative}. 
    \item In the proof we need to establish inequality \eqref{integrablefirst}, which follows if $\|K\|_{L^2(\mathbb{T}^d)}<\infty$. This is not verified by the Biot-Savart kernel $K$. Currently, our proof strategy does not seem to be applicable in the case when $\|K\|_{L^2(\mathbb{T}^d)}=\infty$. 
\end{enumerate}

To simplify presentations, we will restrict to the subcritical case, i.e., Assumption \ref{assumption1}. 

\subsubsection{On the initial condition}
The assumption (2) in Theorem \ref{integrablccase} is notably stronger than its counterpart in \cite{guillin2021uniform}. In particular, $\mu_0$ has to be the probability measure that is identically 1 on $\mathbb{T}^d$. Yet to our best knowledge, this is the first work that addresses the $O((k^2/n^2))$ convergence rate for a singular interaction kernel, and which gives uniform in time estimates. We are not sure if assumption (2) in Theorem \ref{integrablccase} is physically meaningful, or if it can be weakened in the future to the following form: for some $\lambda>1$,
\begin{equation}
    \frac{1}{\lambda^n}\leq \rho_n(0,\cdot)\leq \lambda^n,\quad \text{ for all } n\geq 1.
\end{equation}

\subsubsection{Comparison with recent work} \label{comparisons}
In a very recent work \cite{bresch2022new} a new approach to mean field limit via BBGKY hierarchy is introduced. The $L^p$ estimates of the joint density function $\rho_n(t,\cdot)$ are used in place of information-theoretic functional inequalities. The estimates obtained in that paper do not lead to a Grönwall argument, and thus (due to its proof technique) is valid only on short time $[0,T^*]$ with $T^*$ depending on various parameters of the system. In Theorem \ref{integrablccase}, we assumed a stronger initial condition \ref{condition2} and less singular interaction \ref{assumption1}, then obtained estimates on any finite time interval $[0,T]$ via the BBGKY hierarchy, and which can be uniform in time.

\subsubsection{Main idea of the proof}
We now explain our strategy in generalizing the BBGKY hierarchy in Lacker \cite{lacker2021hierarchies} to singular interactions. Generalization to the singular case is more delicate than it may seem. Lying in the core is the following transport-type inequality (Condition (3) of \cite{lacker2021hierarchies}, Theorem 2.2): for some $0<\gamma<\infty$,
\begin{equation}\label{maintransport}|\langle \nu-\mu,b(t,x,\cdot)\rangle|^2\leq \gamma H(\nu\mid\mu)\quad \text{ for all } t\in[0,T],x\in\mathcal{C}_T^d,\nu\in\mathcal{P}(\mathcal{C}_T^d),\end{equation} where $\mu\in\mathcal{P}(\mathcal{C}_T^d)$ is the law of the McKean-Vlasov SDE \eqref{mckeanvlasov}. When $b$ is bounded and measurable, \eqref{maintransport} follows from Pinsker's inequality. When $b$ is Lipschitz, \eqref{maintransport} follows from  Kantorovich duality (bounding the left hand side by Wasserstein distance $\mathcal{W}_1$) and $T_1$ transport-entropy inequality (bounding $\mathcal{W}_1$ by relative entropy $H$). For more general cases, the weighted Pinsker's inequality (recalled in \eqref{weightedpinsker}) can be used, which in the case $b(t,x,y)=K(x-y)$, essentially requires $\int e^{\lambda |K(x)|^2}d\mu(x)<\infty$ for some $\lambda>0$. Such integrability condition cannot be verified in the case $K\in L^p(\mathbb{R}^d)$ and  $\mu$ the Gaussian measure.

A closer look at the proof of the weighted Pinsker's inequality shows that the exponential integrability condition is a consequence of the Gibbs variational formula 
$$\int\varphi^2 d\mu\leq H(\mu\mid\nu)+\log\int e^{\varphi^2}d\nu$$ as we do the change of measure, which holds for any two probability measures $\mu$, $\nu$ and measurable function $\varphi$ on the same measurable space.

 The idea is, if we have a bit more information of $\mu$ and $\nu$, we can possibly avoid using the Gibbs variational formula as we do the change of measure, and consequently get rid of the requirement that $e^{\varphi^2}$ is integrable. A careful use of Cauchy-Schwartz and Jensen's inequality gives the following result: (see equation (3.8) and (3.9) of \cite{AFST_2005_6_14_3_331_0}):

\begin{equation}
\label{reducedpinsker}    
\left(\int \varphi d|\mu-\nu|\right)^2\leq 4CH\left(\mu\mid\nu\right), \quad C=\frac{1}{6}\int\varphi^2d\mu+\frac{1}{3}\int\varphi^2d\nu.\end{equation}

We stress that when applying the transport-type inequality \eqref{maintransport}, we only need \eqref{maintransport} to hold for all $\nu$ of the type $P_{X^1,\cdots,X^k}^{(k+1\mid k)}$ (a version of the regular conditional law of $X^{k+1}$ given $(X^1,\cdots,X^k)_{[0,T]}$) instead of ranging over all $\nu\in\mathcal{P}(\mathcal{C}_T^d)$.
Thus we will take $\mu=\mu[t]$ the law of \eqref{mckeanvlasov}; take $\nu:=P_{X^1,\cdots,X^k}^{(k+1\mid k)}$; and take $\varphi:=K(X_t^1-\cdot)$ in the case $b(t,x,y)=K(x-y)$.

The target is to find an upper bound of $C$ that is independent of $(X^1,\cdots,X^k)_{0\leq t\leq T}.$ In the case $K\in L^p(\mathbb{R}^d)$, this corresponds, via Hölder's inequality, to find a $L^{q}$-estimate, for some exponent $q$, of the probability density function $P_{X^1,\cdots,X^k}^{(k+1\mid k)}$. This estimate should be uniform in $X^1\cdots X^k$.

The conditional law $P_{X^1,\cdots,X^k}^{(k+1\mid k)}$ is, in general, hard to manipulate. This is because $X^1,\cdots,X^k$ are not independent from the Brownian motion that drives $X^{k+1}$, so the conditional law $P_{X^1,\cdots,X^k}^{(k+1\mid k)}$ cannot be represented as the solution of an SDE. \footnote{We thank Daniel Lacker for pointing out this issue.} A promising route to bound $P_{X^1,\cdots,X^k}^{(k+1\mid k)}$ that we can think of, is to find a pointwise upper bound and  nonzero lower bound of the joint density function of $(X^1,\cdots,X^n)$, such that a pointwise upper bound for $P_{X^1,\cdots,X^k}^{(k+1\mid k)}$ follows for free.

Obviously, such an operation is possible only if one works on the torus $\mathbb{T}^d$ rather than the Euclidean space $\mathbb{R}^d$, and only if we assume the initial law of \eqref{thm1nparticlesystem} has a density that is uniformly upper and lower bounded. Similar assumptions also appear in the seminal work of Jabin and Wang \cite{jabin2018quantitative}, where PDE techniques are used instead. We expect that our proof can give a probabilistic interpretation to the assumptions and techniques in \cite{jabin2018quantitative}. Unfortunately, our assumption (2) in the statement of Theorem \ref{integrablccase} is too restrictive and cannot fulfill the goal of giving such a satisfying probabilistic interpretation.

These discussions suggest that a global stability property is fundamental when applying the BBGKY hierarchy to mean field diffusion. We can expect a trade-off between (i) the regularity of the interaction, and (ii) the regularity of initial laws and the compactness of domain. For locally bounded interactions considered in \cite{lacker2021hierarchies}, we can work with the whole space $\mathbb{R}^d$ and the conditions on the initial law is rather loose: a second order exponential moment $\int_{\mathbb{R}^d}e^{c|x|^2}\mu_0(dx)<\infty$ (for some $c>0$) is sufficient. (This does not contradict the previous paragraph which discusses singular interactions, as here we discuss the locally bounded interaction case). The condition \eqref{1.2} is imposed so that we can verify \eqref{maintransport}. In order to apply the same BBGKY hierarchy for  singular interactions, we restrict to the torus and require conditions \ref{condition2} and \ref{condition3} in Theorem \ref{integrablccase} (it could be possible to remove one of them). We also assume $\nabla\cdot K=0$ in Assumption \ref{assumption1} so that the system is globally stable. These two conditions, though apparently disparate, are just the two faces of the same coin that guarantees \eqref{transporttype} is valid independent of $(x_1,\cdots,x_k)$.

These demanding conditions are not necessary when one only considers qualitative mean field limit, i.e. proving the weak convergence of $L_t^n:=\frac{1}{n}\sum_{i=1}^n\delta_{X_t^{n,i}}$ towards the limiting law $\mu_t$. Indeed, qualitative mean field limit has been established in \cite{hoeksema2020large}, \cite{tomasevic:hal-03086253}, \cite{tomavsevic2023propagation} for interactions satisfying Krylov's conditions $|b(t,x,y)|\leq h_t(x-y)$ with $h\in L^q([0,T];L^p(\mathbb{R}^d)$, for $\frac{d}{p}+\frac{2}{q}<1$. This condition is more general than Assumption \ref{assumption1} in this paper. See also \cite{jabir2018mean} for related works, where only qualitative convergence were proved. We note in passing that \cite{hoeksema2020large} proved this propagation of chaos result via a large deviations approach whereas \cite{tomasevic:hal-03086253} and \cite{tomavsevic2023propagation} adopts a completely different approach: the partial Girsanov transform.

\subsection{Interactions only assumed to have linear growth}
In the next two results we consider interactions that can be path dependent, i.e. the interaction is given by a progressively measurable function $b:[0,T]\times\mathcal{C}_T^d\times\mathcal{C}_T^d\to\mathbb{R}^d$. Progressive measurability is defined as follows (see \cite{lacker2021hierarchies}, Section 2.1.2): identifying $\mathcal{C}_T^d\times\mathcal{C}_T^d$ with $\mathcal{C}_T^{2d}$, then for $k \in \mathbb{N}$ we say that a function $\bar{b}:[0, T] \times \mathcal{C}_{T}^{k} \rightarrow \mathbb{R}$ is progressively measurable if (i) it is Borel measurable and (ii) it is non-anticipative, which means $\bar{b}(t, x)=\bar{b}\left(t, x^{\prime}\right)$ for every $t \in[0, T]$ and $x, x^{\prime} \in C_{T}^{k}$ satisfying $\left.x\right|_{[0, t]}=\left.x^{\prime}\right|_{[0, t]} .$

We return to the classical case of interactions $b$ that are locally bounded. A common assumption that ensures the particle system  \eqref{variant1} (the path-dependent variant of\eqref{nparticlesystem} with an extra drift) to be weakly well-posed is the following linear growth condition when the driving noise is Brownian motion (see for example \cite{karatzas1991brownian} and \cite{liptser2001statistics}): for some constant $0<K<\infty$,
\begin{equation}\label{linear}\left|b_{0}(t, x)\right|+|b(t, x, y)| \leq K\left(1+\|x\|_{t}+\|y\|_{t}\right)\quad \forall t \in[0, T], x, y \in \mathcal{C}_{T}^{d}.\end{equation}

This condition is rather general, under which quantitative propagation of chaos and even the existence and uniqueness of the McKean-Vlasov SDE \eqref{mckeanvariant} are far from obvious. A major breakthrough in this direction is Theorem 2.10 of Lacker \cite{lacker2021hierarchies}, where he proposed the following (slightly more restrictive) conditions on $b_0$ and $b$:
for some $0<K<\infty$,
\begin{equation}\label{1.2}
\left.\begin{array}{l}
\left|b_{0}(t, x)\right|+|b(t, x, y)| \leq K\left(1+\|x\|_{t}+\|y\|_{t}\right) \\
\left|b(t, x, y)-b\left(t, x, y^{\prime}\right)\right| \leq K\left(1+\|y\|_{t}+\left\|y^{\prime}\right\|_{t}\right)
 .\end{array} \quad \forall t \in[0, T], x, y, y^{\prime} \in \mathcal{C}_{T}^{d}\right\}.
\end{equation} 

Under \eqref{1.2}, the transport type inequality \eqref{maintransport} can be verified and we obtain the quantitative propagation of chaos result with rate $O(k^2/n^2)$ in relative entropy. Weak existence and uniqueness of McKean-Vlasov SDE under \eqref{1.2} are also established in \cite{lacker2021hierarchies}.

We will prove in this paper that the linear growth condition \eqref{linear} alone is sufficient for quantitative propagation of chaos, but on a fixed time interval $[0,T^*]$, with $T^*$ depending on various parameters of the system. In the companion paper \cite{han2022solving}, weak well-posedness of the McKean-Vlasov SDE \eqref{mckeanvariant} under \eqref{linear} have been established, extending Proposition 7.2 of \cite{lacker2021hierarchies}. However, the transport type inequality \eqref{maintransport} cannot be verified, so we cannot set up the corresponding BBGKY hierarchy and obtain the $O(k^2/n^2)$ rate.

\begin{theorem}
\label{theorem01} Consider the interacting particle system 
\begin{equation}\label{variant1}dX_t^i=\left(b_0(t,X^i)+\frac{1}{n-1}\sum_{j=1,j\neq i}^n b\left(t,X^i,X^j\right)\right)dt+dW^i_t,\quad i=1,\cdots,n,\end{equation}
where $W^{1},\cdots,W^{n}$ are $n$ independent $d$-dimensional Brownian motions. 
Denote by $P^{(n,n)}\in\mathcal{P}(\left(\mathcal{C}_T^d\right)^n)$ the joint law of $(X^1,\cdots,X^n)|_{[0,T]}$. 

Consider also the limiting McKean-Vlasov SDE, whose well-posedness follows from \cite{han2022solving}:
\begin{equation}\label{mckeanvariant}dX_t=\left(b_0(t,X)+\langle b(t,X,\cdot),\mu\rangle\right)dt+dW_t,\quad\operatorname{Law}(X)=\mu,\quad\operatorname{Law}(X_0)=\mu_0.\end{equation}

Assume the following conditions:
\begin{enumerate}
\item The pair $\left(b_{0}, b\right)$ satisfies \eqref{linear}.
 \item The law $\mu_{0} \in$ $\mathcal{P}\left(\mathbb{R}^{d}\right)$ satisfies $\int_{\mathbb{R}^{d}} e^{c_{0}|x|^{2}} \mu_{0}(d x)<\infty$ for some $c_{0}>0$. \item the initial distribution $P_0^{(n,n)}\in\mathcal{P}((\mathbb{R}^d))^n$ of the particle system factorizes:  $$P_0^{(n,n)}=\mu_0^{\otimes n}.$$
 \end{enumerate} 
 
 Then, since the initial law of the particle system is assumed to be exchangeable, by weak uniqueness of the particle system, the joint law on path space $P^{(n,n)}$ is exchangeable. Denote by $P^{(n,k)}$ its $k$-th marginal, we have:
 
 \begin{enumerate}
\item Propagation of chaos on short time. For any fixed constant $M<\infty$, we can find some $0<T_M<\infty$ depending only on $b_0$, $M$, $b$ and $\mu_0$ (thus independent of $k$ and $n$) such that 
\begin{equation}\label{convergenceof}  H\left(P^{(n,k)}[t]\mid\mu^{\otimes k}[t]\right)\leq \frac{k}{n}M, \quad k=1,\cdots,n,\quad t\in[0,T_M].
\end{equation}Via Pinsker's inequality, this implies 
$$    \|P^{(n,k)}[t]-\mu^{\otimes k}[t]\|_{\text{TV}}\leq\sqrt{\frac{2k}{n}M}<\infty,\quad k=1,\cdots,n, \quad t\in[0,T_M].$$

\item Stability of the McKean-Vlasov SDE. Fix two constants $c>0$ and $C<\infty$. For any two probability measures $\mu^1,\mu^2\in\mathcal{P}(\mathbb{R}^d)$ satisfying $\int_{\mathbb{R}^d}e^{c|x|^2}\mu^i(dx)<C<\infty,i=1,2$, denote by $\mu^1[T]$ and $\mu^2[T]$ the laws on $\mathcal{C}_T^d$ of the McKean-Vlasov SDE with initial distribution respectively $\mu^1$ and $\mu^2$. Then there exists a constant $K$ depending only on $T$, $b_0$, $b$, $c$ and $C$ such that
$$H\left(\mu^1[T]\mid\mu^2[T]\right)\leq KH\left(\mu^1\mid\mu^2\right).$$

\end{enumerate}
\end{theorem}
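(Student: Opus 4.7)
The overall plan is to reduce the $O(k/n)$ rate to a uniform-in-$n$ bound on the full path-space entropy $H(P^{(n,n)}[t]\mid \mu^{\otimes n}[t])$ on a short interval, then invoke the exchangeability subadditivity inequality
$$H\!\left(P^{(n,k)}[t]\,\big|\,\mu^{\otimes k}[t]\right)\leq \frac{k}{n}H\!\left(P^{(n,n)}[t]\,\big|\,\mu^{\otimes n}[t]\right),$$
valid because $P^{(n,n)}$ is exchangeable and the reference law is a product; this is exactly what \eqref{convergenceof} requires once the right-hand entropy equals some constant $M$. By Girsanov's theorem, the target entropy admits the representation
$$H\!\left(P^{(n,n)}[t]\,\big|\,\mu^{\otimes n}[t]\right)=\tfrac12\sum_{i=1}^n\mathbb{E}_{P^{(n,n)}}\!\left[\int_0^t|\Delta_s^i|^2\,ds\right],\quad \Delta_s^i:=\tfrac{1}{n-1}\sum_{j\ne i}b(s,X^i,X^j)-\langle b(s,X^i,\cdot),\mu_s\rangle,$$
so the task becomes controlling this sum of drift-difference integrals uniformly in $n$.

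The necessary preliminary is a sub-Gaussian moment estimate. Applying Itô's formula to $e^{\alpha\|X^i\|_t^2}$, using the linear growth of $(b_0,b)$, the factorized initial condition and the assumption $\int e^{c_0|x|^2}d\mu_0<\infty$, then closing by Gronwall on a short interval $[0,T^*]$ yields
$$\sup_n \mathbb{E}_{P^{(n,n)}}\!\left[e^{\alpha\|X^1\|_{T^*}^2}\right]+\mathbb{E}\!\left[e^{\alpha\|X\|_{T^*}^2}\right]<\infty$$
for some $\alpha>0$; exchangeability keeps the bound uniform in $n$ and $i$, and this simultaneously verifies Novikov's condition legitimizing the Girsanov identity above. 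To bypass the difficulty that the drift-difference expectation is taken under the intractable $P^{(n,n)}$, I would pivot to $Q=\mu^{\otimes n}$ via the Donsker--Varadhan entropy inequality $\mathbb{E}_P[\phi]\leq \varepsilon H(P\mid Q)+\varepsilon\log\mathbb{E}_Q[e^{\phi/\varepsilon}]$ with $\varepsilon<1$ and $\phi=\tfrac12\int_0^t\sum_i|\Delta_s^i|^2\,ds$, obtaining
$$H\!\left(P^{(n,n)}[t]\,\big|\,\mu^{\otimes n}[t]\right)\leq \frac{\varepsilon}{1-\varepsilon}\log\mathbb{E}_Q\!\left[\exp\!\Bigl(\tfrac{1}{2\varepsilon}\int_0^t\sum_{i=1}^n|\Delta_s^i|^2\,ds\Bigr)\right].$$
Under $Q$ the trajectories are i.i.d.\ copies from $\mu$, so conditional on $X^i$ each $\Delta_s^i$ is a centered sample mean of $n-1$ i.i.d.\ random vectors whose conditional variance is controlled by linear growth and the sub-Gaussian moment bound. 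A sub-Gaussian / Hanson--Wright-type concentration argument then bounds the exponential moment by an $n$-independent constant, provided $T_M$ is chosen small enough to absorb the factor $1/\varepsilon$ and the $\|X\|^2$ weights into the Gaussian integrability window.

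Part (2) is handled by the same ingredients applied between two McKean--Vlasov flows. Girsanov gives the chain rule
$$H\!\left(\mu^1[T]\,\big|\,\mu^2[T]\right)=H(\mu^1\mid\mu^2)+\tfrac12\mathbb{E}_{\mu^1[T]}\!\left[\int_0^T\bigl|\langle b(t,X,\cdot),\mu^1_t-\mu^2_t\rangle\bigr|^2\,dt\right],$$
and the integrand is controlled by linear growth together with the uniform Gaussian moment $\int e^{c|x|^2}d\mu^i\leq C$ via a weighted Pinsker / Bolley--Villani inequality in terms of $H(\mu^1_t\mid\mu^2_t)$; a Gronwall argument in $t$ then produces the claimed multiplicative constant $K=K(T,b_0,b,c,C)$.

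The main obstacle is the exponential-moment estimate in the core step: although each $|\Delta_s^i|^2$ has $L^1_Q$-size $O(1/n)$ by the law of large numbers (so the integrand is formally $O(1)$), producing an $n$-uniform bound on $\log\mathbb{E}_Q\!\bigl[\exp\bigl(\tfrac{1}{2\varepsilon}\int_0^t\sum_i|\Delta_s^i|^2\,ds\bigr)\bigr]$ requires carefully exploiting the i.i.d.\ fluctuation structure path-wise and tracking all constants as functions of $T$, $K$, $c_0$ and $\alpha$; balancing these determines the admissible short time $T_M$.
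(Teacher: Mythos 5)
Your proposal is correct and shares the paper's overall architecture --- a uniform-in-$n$ bound on the full entropy $H(P^{(n,n)}[t]\mid\mu^{\otimes n}[t])$ on a short interval, followed by the subadditivity inequality and Pinsker, with part (2) handled exactly as in the paper (Girsanov, weighted Pinsker, Gronwall) --- but your key technical step is executed differently. The paper controls $\mathbb{E}_{\mathbb{P}}[(Z_t^n)^{\kappa}]$ for $\kappa>1$, where $Z^n$ is the full exponential Girsanov martingale; this forces it through Proposition \ref{proposition80} (a Taylor expansion of the exponential combined with the optimal BDG inequality, following Jabir), and then uses $Z\log Z\lesssim 1+Z^{\kappa}$. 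You instead apply the Donsker--Varadhan variational inequality to $\phi=\tfrac12\sum_i\int_0^t|\Delta_s^i|^2\,ds$ and rearrange to get $H\le\tfrac{\varepsilon}{1-\varepsilon}\log\mathbb{E}_Q[e^{\phi/\varepsilon}]$, which eliminates the stochastic-integral term entirely and so needs no BDG argument. Both routes ultimately rest on the same concentration estimate --- the paper's Lemma \ref{lemma7}, i.e.\ $\mathbb{E}_{\mathbb{P}}[(\int_{T_0}^{T_0+\delta}|\triangle b_t^{i,n,\infty}|^2dt)^p]\le p!\beta^p\delta^p/n^p$, obtained from Hoeffding plus sub-Gaussian moment bounds for the i.i.d.\ fluctuations under $\mu^{\otimes n}$ --- and this is exactly the ingredient that closes the step you flag as the ``main obstacle'': by exchangeability and H\"older, $\mathbb{E}_Q[e^{\phi/\varepsilon}]\le\mathbb{E}_Q[\exp(\tfrac{n}{2\varepsilon}\int_0^t|\Delta^1|^2)]\le\sum_p(\beta t/2\varepsilon)^p$, which converges uniformly in $n$ once $t\le T_M<2\varepsilon/\beta$. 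So your approach is, if anything, slightly more elementary, at the price of the same short-time restriction.

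Two minor caveats. First, your preliminary claim $\sup_n\mathbb{E}_{P^{(n,n)}}[e^{\alpha\|X^1\|_{T^*}^2}]<\infty$ is not needed for your Donsker--Varadhan step (which only requires exponential moments under $Q=\mu^{\otimes n}$, exactly what the paper's Section \ref{section3.1} provides); if you do want it, a naive Gronwall bound through $\max_i\|X^i\|$ fails to be uniform in $n$, and you should instead control the empirical average $\tfrac1n\sum_i\|X^i\|_t$ and use Jensen on $x\mapsto e^{\alpha x^2}$. Second, the rearrangement $H\le\tfrac{\varepsilon}{1-\varepsilon}\log\mathbb{E}_Q[e^{\phi/\varepsilon}]$ presupposes $H(P^{(n,n)}[t]\mid\mu^{\otimes n}[t])<\infty$ a priori, which should be recorded (it follows from the validity of the Girsanov transform on $[0,T_M]$).
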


Combining the conclusions in (1) and (2), using also Pinsker's inequality,
we can prove a propagation of chaos result assuming the particle system has i.i.d. initial condition, in the sense that if the (product) initial law converges in relative entropy with rate $O(1/n)$ in each factor, then $P^{(n,k)}$ converges to $\mu^{\otimes k}$ with rate $O((k/n)^{1/2})$ in total variation.

We comment on the significance and limitation of this theorem.

\begin{enumerate}
    \item Although it is difficult to find interactions of a practical interest that satisfy \eqref{linear} but not \eqref{1.2} (an example in dimension 1 is $b(t,x,y)=h(x-y)$ with $h(x)=x1_{\sin(x)>0}$),  the assumption that interactions should satisfy \eqref{1.2} is conceptually more restrictive than the assumption that they should satisfy \eqref{linear}.
    \item The estimate \eqref{convergenceof} is only valid for short time $[0,T_M]$, and it is not clear to us how to generalize it to longer time intervals.

    \item We hope to prove quantitative propagation of chaos results when one replaces the driving noise by a fractional Brownian motion $B^H$. We find the argument in Theorem \ref{theorem01} can be generalized to the fractional case neatly.
\end{enumerate}

\subsection{Generalization to the case of a fractional Brownian noise}

The proof of Theorem \ref{theorem01} can be generalized, giving rise to quantitative propagation of chaos results (on short time intervals) for interacting particle systems driven by fractional Brownian noise.

In the following two theorems, the weak well-posedness of \eqref{fractionalpart} and \eqref{fractionalpart2} follow from Nualart\cite{NUALART2002103} (stated in dimension 1, but generalizes to higher dimensions straightforwardly). Weak well-posedness of the McKean-Vlasov SDE \eqref{fractionalvlasov} and \eqref{fractionalvlasov2} (with different Hurst parameters and assumptions on the drift) are proved in \cite{galeati2021distribution}, \cite{han2022solving}.

Previous works, especially \cite{WOS:000569820100011}, \cite{cass2015evolving} and \cite{10.1214/20-AOP1465}, also cover propagation of chaos results for fractional Brownian motion driven interacting particle systems. In these works the drift is assumed to be Lipschitz continuous. These  papers also serve as important motivations for this work.

\subsubsection{Singular fractional case}

\begin{theorem}\label{theorem02}  Assume that $H\in(0,\frac{1}{2}]$,   given $b_0:[0,T]\times\mathcal{C}_T^d\to\mathbb{R}^d$ and $b:[0,T]\times\mathcal{C}_T^d\times\mathcal{C}_T^d\to\mathbb{R}^d$ two progressively measurable functions, and given the initial law $\mu_0\in\mathcal{P}(\mathbb{R}^d)$. Assume that $b_0,b,\mu_0$ satisfy the same assumptions stated in in theorem \ref{theorem01}.

Consider the interacting particle system
\begin{equation}\label{fractionalpart}dX_t^i=\left(b_0(t,X^i)+\frac{1}{n-1}\sum_{j=1,j\neq i}^n b\left(t,X^i,X^j\right)\right)dt+dB^{H,i}_t,\quad i=1,\cdots,n,\end{equation}
where $B^{H,1},\cdots,B^{H,n}$ are $n$ independent $d$-dimensional fractional Brownian motions with Hurst index $H$. Assume the initial condition $X_0^1,\cdots,X_0^n$ are i.i.d with law $\mu_0$.

Consider also the McKean-Vlasov SDE
\begin{equation}\label{fractionalvlasov}dX_t=\left(b_0(t,X)+\langle b(t,X,\cdot),\mu\rangle\right)dt+dB^H_t,\quad\operatorname{Law}(X)=\mu,\quad\operatorname{Law}(X_0)=\mu_0.\end{equation}
 By weak uniqueness of \eqref{fractionalpart}, the joint law $P^{(n,n)}[T]\in\mathcal{P}((\mathcal{C}_T^d)^n)$ of $(X^1,\cdots,X^n)_{[0,T]}$ is exchangeable. Denote by $P^{(n,k)}$ its $k$-th marginal, then we have propagation of chaos on short time: we can find constants $M<\infty$ and $0<T_M<\infty$ depending only on $b_0$, $b$, $H$ and $\mu_0$ (thus independent of $k$ and $n$) such that 

\begin{equation}\label{convergenceof3}  H\left(P^{(n,k)}[t]\mid\mu^{\otimes k}[t]\right)\leq \frac{k}{n}M, \quad k=1,\cdots,n,\quad t\in[0,T_M].
\end{equation}
Via Pinsker's inequality, this implies 
$$    \|P^{(n,k)}[t]-\mu^{\otimes k}[t]\|_{\text{TV}}\leq\sqrt{\frac{2k}{n}M}<\infty,\quad  k=1,\cdots,n,\quad  t\in[0,T_M].$$

\end{theorem}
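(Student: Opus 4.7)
The plan is to adapt the proof of Theorem \ref{theorem01} by substituting the fractional Girsanov theorem (Decreusefond--Üstünel, Nualart--Ouknine) for the classical one. Write $B^{H,i}_t=\int_0^t\mathcal{K}_H(t,s)\,dW^i_s$ with Molchan kernel $\mathcal{K}_H$ and underlying standard Brownian motions $W^i$; the inverse Volterra operator $\mathcal{K}_H^{-1}$, acting on absolutely continuous paths starting at $0$, is essentially a Riemann--Liouville fractional derivative of order $\tfrac12-H$ when $H<\tfrac12$, and the identity when $H=\tfrac12$. The overall strategy is: (i) reduce the $k$-marginal bound to a bound on the full joint entropy; (ii) express the latter via fractional Girsanov; (iii) control it by subGaussian concentration under the reference measure; (iv) close via a Gibbs-type change-of-measure and Gronwall on short time.

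First, by subadditivity of relative entropy over exchangeable marginals,
\[
H\bigl(P^{(n,k)}[t]\mid\mu^{\otimes k}[t]\bigr)\ \leq\ \frac{k}{n}\,H\bigl(P^{(n,n)}[t]\mid\mu^{\otimes n}[t]\bigr),
\]
so it suffices to prove $H(P^{(n,n)}[t]\mid\mu^{\otimes n}[t])\leq Mn$ uniformly in $n$ for $t\in[0,T_M]$. Setting
\[
D^i_s\ :=\ \frac{1}{n-1}\sum_{j\neq i}b(s,X^i,X^j)-\langle b(s,X^i,\cdot),\mu_s\rangle,
\]
the fractional Girsanov theorem applied coordinate-by-coordinate and the chain rule for relative entropy yield
\[
H\bigl(P^{(n,n)}[T_M]\mid\mu^{\otimes n}[T_M]\bigr)\ =\ \tfrac12\sum_{i=1}^{n}E_{P^{(n,n)}}\int_0^{T_M}\Bigl|\mathcal{K}_H^{-1}\Bigl(\int_0^\cdot D^i_u\,du\Bigr)(s)\Bigr|^2 ds.
\]

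Under $\mu^{\otimes n}$ the paths $X^1,\dots,X^n$ are i.i.d., so conditional on $X^i$ the quantity $(n-1)D^i_s$ is a centred sum of $n-1$ i.i.d.\ random variables in $j$. Combining the linear-growth bound on $b$, Fernique's theorem for fBm sample paths, and short-time exponential moments of the limiting McKean--Vlasov equation (which exist on $[0,T_M]$ via Gronwall estimates using the hypothesis $\int e^{c_0|x|^2}\mu_0(dx)<\infty$), one obtains a subGaussian estimate of the form
\[
E_{\mu^{\otimes n}}\int_0^{T_M}\Bigl|\mathcal{K}_H^{-1}\Bigl(\int_0^\cdot D^i_u\,du\Bigr)(s)\Bigr|^2 ds\ \leq\ \frac{C(b_0,b,H,\mu_0,T_M)}{n}.
\]
To transfer this estimate from $\mu^{\otimes n}$ to $P^{(n,n)}$, apply the Gibbs variational inequality $E_{P^{(n,n)}}[\psi]\leq \log E_{\mu^{\otimes n}}[e^{\psi}]+H(P^{(n,n)}\mid\mu^{\otimes n})$ with $\psi$ a small multiple of the sum over $i$ of the integrand; the subGaussianity from the previous step keeps $E_{\mu^{\otimes n}}[e^\psi]$ finite, and choosing $T_M$ small enough yields a self-bounding inequality for $H(P^{(n,n)}[T_M]\mid\mu^{\otimes n}[T_M])$ that closes and produces the required $O(n)$ bound.

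The principal technical obstacle is controlling $\mathcal{K}_H^{-1}$ applied to the time-integrated drift. For $H<\tfrac12$ this operator loses $\tfrac12-H$ derivatives, so one must combine its boundedness on appropriate Besov/Sobolev spaces with the regularity that $s\mapsto\int_0^s D^i_u\,du$ inherits from the linear-growth assumption and the Hölder continuity of the fBm paths. A secondary issue is commuting $\mathcal{K}_H^{-1}$ with the empirical average over $j$: by linearity this reduces to a uniform-in-$n$ subGaussian bound on a single summand, which the linear-growth hypothesis supplies once short-time exponential moments are in hand. These steps, trivial at $H=\tfrac12$ where Theorem \ref{theorem01} is recovered, constitute the genuine new content required for general $H\in(0,\tfrac12]$.
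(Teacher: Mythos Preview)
Your overall strategy is sound and close to the paper's, but three points need correcting.

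First, the reduction in step (i) is misstated: subadditivity gives $H(P^{(n,k)}\mid\mu^{\otimes k})\leq\frac{k}{n}H(P^{(n,n)}\mid\mu^{\otimes n})$, so to obtain $\frac{k}{n}M$ you must show $H(P^{(n,n)}\mid\mu^{\otimes n})\leq M$ \emph{uniformly in $n$}, not $\leq Mn$. The remainder of your argument is in fact aimed at the uniform bound, so this is presumably a slip, but it is essential.

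Second, your description of the operator $\mathcal{K}_H^{-1}$ as the main technical obstacle is inverted. For $H\in(0,\tfrac12)$, $\mathcal{K}_H^{-1}$ composed with the integral $\int_0^\cdot u_r\,dr$ is a fractional \emph{integral} of $u$ (of order $\tfrac12-H$), and one has the elementary pointwise bound
\[
\Bigl|\mathcal{K}_H^{-1}\Bigl(\int_0^\cdot u_r\,dr\Bigr)(s)\Bigr|\ \leq\ C_{H,T}\,\sup_{0\leq r\leq s}|u_r|.
\]
This is the estimate the paper uses (equation \eqref{singfracbound}). No Besov/Sobolev machinery and no H\"older regularity of the fBm paths are needed: linear growth of $b$ together with Fernique and Gronwall for the McKean--Vlasov solution give sub-Gaussian tails for each summand $\mathcal{K}_H^{-1}(\int_0^\cdot[b(r,X^{i},X^{j})-\langle b(r,X^{i},\cdot),\mu\rangle]\,dr)(s)$, and linearity of $\mathcal{K}_H^{-1}$ plus Hoeffding then yields the key moment bound $E_{\mu^{\otimes n}}\bigl[(\int_0^\delta|\phi^i|^2)^p\bigr]\leq p!\,\beta^p\delta^p/n^p$. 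The case $H<\tfrac12$ is thus no harder than $H=\tfrac12$; the genuinely delicate case is $H>\tfrac12$ (Theorem~\ref{theorem2}), where $\mathcal{K}_H^{-1}$ involves fractional differentiation and H\"older control of $b$ is required.

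Third, in the Gibbs closing step you write ``$\psi$ a small multiple of the sum''. Taking $\psi=\lambda\Psi$ with $\Psi=\sum_i\int|\phi^i|^2$, Gibbs and $H=\tfrac12E_P[\Psi]$ give $(2-\tfrac{1}{\lambda})H\leq\tfrac{1}{\lambda}\log E_Q[e^{\lambda\Psi}]$, which closes only for $\lambda>\tfrac12$, not $\lambda$ small. The role of small $T_M$ is separate: it guarantees $E_Q[e^{\lambda\Psi}]<\infty$ uniformly in $n$, which follows from the moment bound above via H\"older over $i$ and a geometric series (convergent for $\lambda\beta T_M<1$).

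By comparison, the paper works on the other side of the density: it writes $H(P^{(n,n)}\mid\mu^{\otimes n})=E_{\mu^{\otimes n}}[Z_t^n\log Z_t^n]$ and bounds $E_{\mu^{\otimes n}}[(Z_t^n)^\kappa]$ for $\kappa>1$ uniformly in $n$, via a Taylor expansion of the stochastic exponential combined with the sharp BDG inequality and the same $p!\beta^p\delta^p/n^p$ moment lemma (Jabir's argument). Your Gibbs route avoids the BDG step and is slightly more direct; both approaches rest on exactly the same moment lemma, and both require the short-time restriction for the series to converge.
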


\subsubsection{Regular fractional case} 

In this case we assume $b_0$ and $b$ are state dependent, that is, we consider $b_0:[0,T]\times\mathbb{R}^d\to\mathbb{R}^d$ and $b:[0,T]\times\mathbb{R}^d\times\mathbb{R}^d\to\mathbb{R}^d$.

\begin{theorem}\label{theorem2}
With the Hurst index $H\in(\frac{1}{2},1)$, consider the interacting particle system

\begin{equation}\label{fractionalpart2}dX_t^i=\left(b_0(t,X_t^i)+\frac{1}{n-1}\sum_{j=1,j\neq i}^n b\left(t,X_t^i,X_t^j\right)\right)dt+dB^{H,i}_t,\quad i=1,\cdots,n,\end{equation}
and the McKean-Vlasov SDE
\begin{equation}\label{fractionalvlasov2}dX_t=\left(b_0(t,X_t)+\langle b(t,X_t,\cdot),\mu\rangle\right)dt+dB^H_t,\quad\operatorname{Law}(X)=\mu,\quad\operatorname{Law}(X_0)=\mu_0.\end{equation}

Assume the following conditions are satisfied:
\begin{itemize}

\item With constants $1>\alpha>1-1/(2H)>0$ and $\beta>H-1/2>0$,
$$|b_0(t, x)-b_0(s, y)| \leqslant C\left(|x-y|^{\alpha}+|t-s|^{\beta}\right) \quad \text { for all } s, t \in[0, T], x, y \in \mathbb{R}^{d},$$
$$|b(t, x,x')-b(s, y,y')| \leqslant C\left(|x-y|^{\alpha}+|x'-y'|^{\alpha}+|t-s|^{\beta}\right) \quad  s, t \in[0, T], x, y,x',y' \in \mathbb{R}^{d}.$$

    \item  $X_0^1,\cdots,X_0^n$ are i.i.d with law $\mu_0$. 
    \item For some  $c_{0}>0$ sufficiently small,  $\int_{\mathbb{R}^{d}} e^{c_{0}|x|^{2}} \mu_{0}(d x)<\infty$. (This moment condition is not necessary if $b_0$ and $b$ are bounded.) 
    
\end{itemize}
By weak uniqueness of \eqref{fractionalpart2}, the joint law $P^{(n,n)}\in\mathcal{P}((\mathcal{C}_T^d)^n)$ of $(X^1,\cdots,X^n)_{[0,T]}$ is exchangeable. Denote by $P^{(n,k)}$ its $k$-th marginal, then we have propagation of chaos on short time: we can find constants $M<\infty$ and $0<T_M<\infty$ depending only on $b_0$, $b$, $H$ and $\mu_0$ (thus independent of $k$ and $n$) such that 
\begin{equation}\label{convergenceof4}  H\left(P^{(n,k)}[t]\mid\mu^{\otimes k}[t]\right)\leq \frac{k}{n}M, \quad k=1,\cdots,n,\quad t\in[0,T_M].
\end{equation}
Via Pinsker's inequality, this implies 
$$    \|P^{(n,k)}[t]-\mu^{\otimes k}[t]\|_{\text{TV}}\leq\sqrt{\frac{2k}{n}M}<\infty,\quad k=1,\cdots,n, \quad t\in[0,T_M].$$

\end{theorem}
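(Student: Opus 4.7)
The strategy mirrors that sketched for Theorem \ref{theorem02}, but replaces the fractional Girsanov transform for $H\leq 1/2$ by its counterpart for $H\in(1/2,1)$, where the inverse Volterra kernel $K_H^{-1}$ is (essentially) a Riemann--Liouville fractional derivative of order $H-1/2$ rather than a fractional integral. The H\"older hypotheses on $b_0,b$ are precisely what is needed to make this operator applicable to the empirical drift fluctuation of the $n$-particle system. Writing $\mathbb{Q}^n := \mu^{\otimes n}$ for the product of $n$ independent copies of the McKean--Vlasov law, the fractional Girsanov theorem (as used in \cite{galeati2021distribution} and \cite{han2022solving}) gives
$$H\bigl(P^{(n,n)}[T]\mid \mathbb{Q}^n[T]\bigr) \;=\; \tfrac{1}{2}\sum_{i=1}^n \mathbb{E}_{P^{(n,n)}}\int_0^T \bigl|K_H^{-1}\varepsilon_i\bigr|^2(s)\,ds,$$
where $\varepsilon_i(t):=\frac{1}{n-1}\sum_{j\neq i}\bigl(b(t,X_t^i,X_t^j)-\langle b(t,X_t^i,\cdot),\mu_t\rangle\bigr)$ is the drift fluctuation at particle $i$.

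The first technical step is to reduce $\|K_H^{-1}\varepsilon_i\|_{L^2}$ to a H\"older seminorm of $\varepsilon_i$. By standard estimates for fractional derivatives, $\|K_H^{-1}\varepsilon_i\|_{L^2[0,T]}^2 \lesssim \|\varepsilon_i\|_{C^{H-1/2+\delta}([0,T])}^2$ for some small $\delta>0$. The paths of $B^H$ are $\gamma$-H\"older continuous for every $\gamma<H$, so under the hypotheses each $X_t^i$ is at most $\gamma$-H\"older. The cross condition $\alpha>1-1/(2H)$, rewritten as $\alpha H>H-1/2$, guarantees that the map $t\mapsto b(t,X_t^i,X_t^j)$ is H\"older of exponent strictly greater than $H-1/2$ via the spatial variables, while $\beta>H-1/2$ handles the direct $t$-dependence. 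Hence $\varepsilon_i$ lies in the right H\"older space almost surely, and $K_H^{-1}$ applies pathwise.

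The second step is subGaussian concentration under the reference measure. Under $\mathbb{Q}^n$ the paths $X^1,\dots,X^n$ are i.i.d.\ McKean--Vlasov solutions; conditionally on $X^i$, $\varepsilon_i(t)$ is a centered empirical average over $j\neq i$ of i.i.d.\ terms. Exactly as in Theorem \ref{theorem01}, the linear growth of $b_0,b$, the Gaussian tails of $B^H$, and the exponential moment assumption on $\mu_0$ give subGaussian tails of $\|X^j\|_T$ on a short interval $[0,T^*]$. This yields $\mathbb{E}_{\mathbb{Q}^n}\|\varepsilon_i\|_{C^{H-1/2+\delta}}^2 \lesssim 1/n$, uniformly in $i$. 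Transferring this expectation from $\mathbb{Q}^n$ to $P^{(n,n)}$ via a Gibbs variational (entropy) inequality --- the step that dictates the short-time restriction $t\leq T_M$, because it requires $e^{c_0\|X^i\|_t^2}$ to remain integrable --- produces $H(P^{(n,n)}[t]\mid \mathbb{Q}^n[t])\leq M$ for $t\leq T_M$. The classical subadditivity of relative entropy for exchangeable laws (as in \cite{lacker2021hierarchies}) then closes the argument:
$$H\bigl(P^{(n,k)}[t]\mid \mu^{\otimes k}[t]\bigr) \;\leq\; \tfrac{k}{n}\,H\bigl(P^{(n,n)}[t]\mid \mathbb{Q}^n[t]\bigr) \;\leq\; \tfrac{k}{n}\,M.$$

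\emph{Main obstacle.} The heart of the proof is the regularity--concentration interplay: one needs a H\"older-to-$L^2$ estimate for $K_H^{-1}$ that is sharp enough to produce the crucial $1/n$ factor per particle after taking expectation, and the exponential moment bounds must survive on a quantitative time window $[0,T_M]$. Verifying that the thresholds $\alpha>1-1/(2H)$ and $\beta>H-1/2$ are exactly sufficient for $K_H^{-1}\varepsilon_i$ to lie in $L^2$ --- and tracking how $T_M$ depends on $\alpha,\beta,H,c_0$ and the constant $K$ in the linear growth of $b_0,b$ --- is where the technical bulk of the work concentrates; everything else is a careful rerun of the Brownian argument in Theorem \ref{theorem01}.
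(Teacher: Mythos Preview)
Your overall architecture is close to the paper's, and the role you assign to the H\"older thresholds $\alpha>1-1/(2H)$, $\beta>H-1/2$ is exactly right. But there is a genuine gap in your concentration step, and the paper's route differs from yours precisely there.

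You propose to bound $\|K_H^{-1}(\int_0^\cdot\varepsilon_i)\|_{L^2}^2$ by the H\"older seminorm $\|\varepsilon_i\|_{C^{H-1/2+\delta}}^2$ and then claim $\mathbb{E}_{\mathbb{Q}^n}\|\varepsilon_i\|_{C^{H-1/2+\delta}}^2\lesssim 1/n$ via subGaussian concentration. The second claim does not follow from Hoeffding: the H\"older norm is a supremum over pairs $(s,t)$, and concentration of an i.i.d.\ empirical average in a H\"older (Banach) norm is not a consequence of pointwise subGaussian tails. The triangle inequality $\|\frac{1}{n-1}\sum_j g_j\|_{C^\theta}\leq\frac{1}{n-1}\sum_j\|g_j\|_{C^\theta}$ gives only an $O(1)$ bound, not $O(1/n)$, because it discards the centering.

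The paper avoids this detour by exploiting the \emph{linearity} of $K_H^{-1}$: it writes
\[
\triangle K_t^{i,n}\;:=\;K_H^{-1}\Bigl(\int_0^\cdot\varepsilon_i(r)\,dr\Bigr)(t)\;=\;\frac{1}{n-1}\sum_{j\neq i}\triangle K_t^{i,j,n},
\]
establishes via the operator bound \eqref{inversenormestimate} and the H\"older moment estimates of \cite{han2022solving} that each $\triangle K_t^{i,j,n}$ is subGaussian (pointwise in $t$), and then applies Hoeffding \emph{at each fixed $t$}. Integrating in $t$ with Jensen yields the key moment lemma $\mathbb{E}_{\mathbb{Q}^n}[(\int_0^\delta|\triangle K_t^{i,n}|^2dt)^p]\leq p!\,\beta^p\delta^{(2-2H)p}/n^p$, which is where the $1/n$ factor actually comes from.

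A second, smaller divergence: the paper does not transfer the expectation from $\mathbb{Q}^n$ to $P^{(n,n)}$ via a Gibbs variational inequality. Instead it works entirely under $\mathbb{Q}^n$ and directly bounds $\sup_n\mathbb{E}_{\mathbb{Q}^n}[(Z_t^n)^\kappa]$ for the Girsanov density $Z_t^n$, using a Taylor expansion of the exponential martingale together with sharp BDG and the moment lemma above (Proposition~\ref{proposition8}). The short-time restriction $t\leq T_M$ arises from the convergence of that Taylor series, which requires $\delta^{2-2H}$ small. Your Gibbs variational route could in principle be closed --- one needs $\sup_n\mathbb{E}_{\mathbb{Q}^n}[e^{\lambda\sum_i\int_0^\delta|\triangle K^{i,n}|^2}]<\infty$, which after H\"older with exponent $n$ reduces to the same moment lemma --- but only once you have the pointwise Hoeffding input, not the H\"older-norm bound you propose.
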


\section{Unbounded Integrable Interaction}\label{section22}

This section is devoted to the proof of Theorem \ref{integrablccase}.

\subsection{Pointwise probability density estimates} 

We will make crucial use of the joint density function of \eqref{thm1nparticlesystem}, especially the fact that the density function is bounded uniformly from above and below. This is facilitated by the fact that $\nabla\cdot V=0$. (We could consider more generally $\nabla \cdot V\in L^\infty$, which we leave as a further extension).

We import the  following results from \cite{guillin2021uniform}, which is set up in the case of smooth initial data. The proof can be found in \cite{guillin2021uniform} and is thus omitted here.
\begin{proposition}[Theorem 2 of \cite{guillin2021uniform}]\label{esttrans1} Let $\mu_0\in \mathcal{C}_\lambda^\infty(\mathbb{T}^d)$ be the initial law of \eqref{thm1mckeanvlasov}. Denote by $\bar{\rho}(t,x)$ the probability density function of $X_t$ solving the McKean-Vlasov SDE \eqref{thm1mckeanvlasov}, then we have $\bar{\rho}(t,x)\in\mathcal{C}_\lambda^\infty(\mathbb{R}^+\times \mathbb{T}^d).$
\end{proposition}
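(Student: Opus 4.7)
\emph{Proof plan.} The plan is to analyze the Fokker--Planck PDE satisfied by $\bar\rho$ and exploit the divergence-free structure of $K$. In the distributional sense, $\bar\rho$ solves
$$\partial_t\bar\rho(t,x)=\tfrac12\Delta\bar\rho(t,x)-\nabla_x\cdot\bigl(\bar\rho(t,x)\,b_t(x)\bigr),\qquad b_t(x):=\int_{\mathbb{T}^d} K(x-y)\,\bar\rho(t,y)\,dy.$$
A short Fubini computation shows that $\nabla\cdot K=0$ implies $\nabla\cdot b_t\equiv 0$ for every $t$, so the equation can be recast in the non-conservative form
$$\partial_t\bar\rho=\tfrac12\Delta\bar\rho-b_t\cdot\nabla\bar\rho.$$
The crucial structural observation is that constants solve this linear equation.

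The two-sided uniform bound then follows from the parabolic maximum principle. By linearity, $\bar\rho-c$ solves the same equation $\partial_t u=\tfrac12\Delta u - b_t\cdot\nabla u$ for every constant $c$, and provided $b_t$ is bounded uniformly in $t$, the maximum principle on $\mathbb{T}^d$ yields $\inf_x\bar\rho_0\le\bar\rho(t,x)\le\sup_x\bar\rho_0$, hence $1/\lambda\le\bar\rho(t,x)\le\lambda$ for all $t\ge 0$. Boundedness of $b_t$ is provided by Young's convolution inequality: since $\bar\rho(t,\cdot)$ is a probability density with $\|\bar\rho(t,\cdot)\|_\infty\le\lambda$,
$$\|b_t\|_\infty\le\|K\|_{L^p}\|\bar\rho(t,\cdot)\|_{L^{p/(p-1)}}\le\|K\|_{L^p}\lambda^{1/p},$$
depending only on $\|K\|_{L^p}$ and $\lambda$. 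Smoothness then follows by a standard parabolic bootstrap: $b_t$ inherits Hölder regularity from $\bar\rho\in L^\infty$ via $K\in L^p$ with $p>d$, and Schauder estimates applied to the non-conservative equation iteratively upgrade the regularity of $\bar\rho$ and of $b_t=K*\bar\rho(t,\cdot)$, producing $\bar\rho\in C^\infty(\mathbb{R}^+\times\mathbb{T}^d)$.

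The main obstacle is the circular dependence between the $L^\infty$ bounds on $\bar\rho$ and on $b_t$, together with the fact that $K$ is only assumed to be $L^p$ (so the PDE is a priori only distributional). The cleanest resolution is to prove the result first for a smooth, divergence-free mollification $K_\varepsilon$ (preserving $\nabla\cdot K_\varepsilon=0$ by convolving $K$ with a radial smooth mollifier on $\mathbb{T}^d$): for the regularized problem the maximum principle applies classically and yields uniform-in-$\varepsilon$ bounds $1/\lambda\le\bar\rho_\varepsilon\le\lambda$ and uniform parabolic regularity. Passing to the limit $\varepsilon\to 0$ via weak compactness, combined with the weak uniqueness of the limiting McKean--Vlasov SDE under Assumption \ref{assumption1} (cf.\ \cite{rockner2021well,han2022solving}), identifies the limit as $\bar\rho$ and transfers the bounds and the smoothness to it.
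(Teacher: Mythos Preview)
The paper does not give its own proof of this proposition; it is simply imported verbatim from \cite{guillin2021uniform} (Theorem~2 there), with the preamble ``We import the following results from \cite{guillin2021uniform}.'' So there is no proof in the paper to compare against.

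That said, your sketch is essentially the correct argument and matches the spirit of the cited reference. The key points---that $\nabla\cdot K=0$ forces $\nabla\cdot b_t=0$, so the Fokker--Planck equation becomes a linear transport--diffusion equation for which constants are solutions and the parabolic maximum principle propagates the two-sided bound $1/\lambda\le\bar\rho\le\lambda$---are exactly right. The mollification step you propose (regularize $K$, get uniform-in-$\varepsilon$ bounds, pass to the limit) is also the mechanism the paper itself invokes later, in Section~\ref{relaxedregularity}, for the $n$-particle density $\rho_n$; see the discussion around Lemma~7 of \cite{guillin2021uniform} there. One small remark: your bound $\|b_t\|_\infty\le\|K\|_{L^p}\|\bar\rho(t,\cdot)\|_{L^{p/(p-1)}}$ already uses $\|\bar\rho(t,\cdot)\|_\infty\le\lambda$, which is part of what you are trying to prove, so it is cleanest to run the whole argument at the mollified level first (where everything is smooth and the maximum principle applies classically) and only then pass to the limit---which is precisely what you outline in the final paragraph.
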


Denote by $\rho_n(t,\cdot)$ the joint probability density function  of $(X_t^1,\cdots,X_t^n)$ which solves the $n$-particle system \eqref{thm1nparticlesystem}. Then $\rho_n$ solves the following Fokker-Planck PDE
\begin{equation}\label{densityrhon}\partial_t\rho_n=-\sum_{i=1}^N\nabla_{x_i}\cdot\left(\left(\frac{1}{n-1}\sum_{j=1}^n K\left(x_i-x_j\right)\right)\rho_n\right)+\sum_{i=1}^n\Delta_{x_i}\rho_n.\end{equation}

We first work under the following regularity assumption
\begin{Assumption}\label{smoothness}
There is a smooth solution $\rho_n$ to \eqref{densityrhon}.
\end{Assumption}

This assumption will be removed at the end of proof. 

Since $\nabla\cdot K=0$  and $\rho_n$ is smooth, a similar argument as in Lemma 7 of \cite{guillin2021uniform} implies that $$\rho_n(t,\cdot)\in \mathcal{C}^\infty_\lambda((\mathbb{T}^d)^n),$$ as a consequence of $\rho_n(0,\cdot)\in \mathcal{C}^\infty_\lambda((\mathbb{T}^d)^n)$. Alternatively, one may first work with the smoothed version by mollifying $K$ (and finally take the limit in Section \ref{relaxedregularity}).

\subsubsection{Marginal densities and conditional densities}
Denote by $\rho_n^k$ the law of the first $k$ marginals of $\rho_n$, defined as
$$\rho_n^k(t,x_1,\cdots,x_k):=\int_{\mathbb{T}^{(N-k)d}}\rho_n(t,x_1,\cdots,x_n)dx_{k+1}\cdots dx_n.$$
Then for each $k=1,\cdots,n-1$,
\begin{equation}\frac{1}{\lambda}\leq\rho_n^k(t,\cdot)\leq\lambda\quad\text{ a.e. on } \mathbb{T}^{kd}.\end{equation} 

Denote by $\rho_n^{x_1,\cdots,x_k}(t,x_{k+1})$ the conditional probability density function of $X_t^{k+1}$ given $(X_t^1,\cdots,X_t^k)=(x_1,\cdots,x_k)$. Then by definition of conditional densities \footnote{This quadratic scaling $\lambda^2$ is a bit unusual. It is reflected in our assumption that the initial law of particle system should "almost" be the Lebesgue measure, see Assumption (2) of Theorem \ref{integrablccase}. Our proof does not work in the general setting.}
\begin{equation}\label{densitytrans2}\frac{1}{\lambda^2}\leq \rho_n^{x_1,\cdots,x_k}(t,x_{k+1})\leq \lambda^2\quad  a.e.\quad x_{k+1}\in \mathbb{T}^d.\end{equation}
for all  $(x_1,\cdots,x_k)\in(\mathbb{T}^d)^k$ and any $k=1,\cdots,n-1$.

\subsection{Two functional inequalities}
\subsubsection{Square integrability inequalities}
Since $K\in L^2(\mathbb{T}^d)$ ( follows from Assumption \ref{assumption1} assuming $d\geq 2$, as the $d=1$ case is not relevant to us), and $\frac{1}{\lambda}\leq \rho_ n^k\leq\lambda$, there exists a universal constant $C$ depending only on $K$ and $\lambda$ such that 
\begin{equation} \label{integrablefirst}
    \int_{(\mathbb{T}^d)^k} K^2(x_i-x_j)\rho_n^k(t,x^1,\cdots,x^k)\leq C<\infty,\quad 1\leq i< j\leq k.
\end{equation}

We also have the following: (recalling $\bar{\rho}$ is the density of \eqref{thm1mckeanvlasov}),
\begin{equation}
      \int_{(\mathbb{T}^d)^k} \left(\int_{\mathbb{T}^d}K(x_i-y)\bar{\rho}(y)dy\right)^2\rho_n^k(t,x^1,\cdots,x^k)\leq C<\infty,\quad 1\leq i\leq k.
\end{equation}
It suffices to use the fact that $K\in L^1(\mathbb{T}^d)$ and that $\frac{1}{\lambda}\leq \bar{\rho}\leq \lambda.$

Combining the previous two claims, we find some $M>0$ that depends only on $\lambda$ and $K$ such that
\begin{equation}\label{squareintegrability}\mathbb{E}\left[ \left|K(X_t^i-X_t^j)-\int_{\mathbb{T}^d}K(X_t^i-y)\bar{\rho}(y)dy\right|^2\right]<M<\infty,\quad1\leq i\neq j\leq n.\end{equation}

\subsubsection{Transport type Inequalities} 
We claim there exists a universal constant $\gamma>0$ such that for any $x\in\mathbb{T}^d$ and any $(x_1,\cdots,x_k)\in(\mathbb{T}^d)^k,$
\begin{equation}\label{transporttype}
    \left|\int_{\mathbb{T}^d} K(x-y)\left(\bar{\rho}(t,y)-\rho_n^{x_1,\cdots,x_k}(t,y)\right)dy\right|^2\leq \gamma H\left(\rho_n^{x_1,\cdots,x_k}(t,\cdot)\mid \bar{\rho}(t,\cdot) \right).
\end{equation}
The claim follows from inequality \eqref{reducedpinsker} which we state here again: 
\begin{equation}
\label{reducedpinsker2}
\left(\int \varphi d|\mu-\nu|\right)^2\leq 4CH\left(\mu\mid\nu\right), \quad C=\frac{1}{6}\int\varphi^2d\mu+\frac{1}{3}\int\varphi^2d\nu,\end{equation}
the fact that $K\in L^2(\mathbb{T}^d)$, and transition density estimates in equation \eqref{densitytrans2} and Proposition \ref{esttrans1}.

Equation \eqref{transporttype} is the hardest condition to verify when one establishes a BBGKY hierarchy. When $K$ is locally bounded, or at least when $|K|^2$ is exponentially integrable, a method to check it is to use the weighted Pinsker's inequality, as in \cite{lacker2021hierarchies}. In this case we do not need any information on the conditional density $\rho_n^{x_1,\cdots,x_k}.$ For singular $K$ this method no longer works, yet we can still close the bound using \eqref{reducedpinsker2} assuming that we know more about the conditional density $\rho_n^{x_1,\cdots,x_k}$.

\subsection{The hierarchy and computations of relative entropy}

Since the interaction $K$ is state dependent (we do not consider more general path dependent interactions as in the work of Lacker\cite{lacker2021hierarchies}), the particle system and the McKean-Vlasov SDEs are Markov processes and we choose to work with the relative entropy at each fixed time $t\in[0,T]$ rather than the relative entropy on the path space $\mathcal{C}_T^d$. This technical change is indeed crucial to our forthcoming argument.

In terms of density functions, we define $H_t^k$ as follows:
$$H_t^k:=H\left(\rho_n^k(t,\cdot)\mid \bar{\rho}^{\otimes k}(\cdot)\right),$$
where $H(\cdot\mid\cdot)$ is the relative entropy on $\mathcal{P}\left((\mathbb{T}^d)^k\right).$

For each $t>0,s>0$ denote by $P^k[t,t+s]$ the law of $(X^1,\cdots,X^k)_{[t,t+s]}$ if at time $t$ the joint law $(X_t^1,\cdots,X_t^n)$ has density $\rho_n(t,\cdot)$. This definition makes sense because the particle system \eqref{thm1nparticlesystem} is a Markov process. Similarly denote by $\mu[t,t+s]$ the law of the McKean-Vlasov SDE \eqref{thm1mckeanvlasov} (which is also a Markov process) on the time interval $[t,t+s]$, whose density at time $t$ is $\bar{\rho}(t,\cdot)$.

Then by the data processing inequality of relative entropy, $$H_{t+s}^k\leq H\left(P^k[t,t+s]\mid \mu^{\otimes k}[t,t+s]\right)\quad\text{ for each } s\geq 0.$$
and equality holds if $s=0$.

Consequently \footnote{We have non-rigorously assumed that $H_t^k$ is differentiable in $t$. This technical issue can be resolved if we write the differential inequalities in its integral form, and solutions to these inequalities are unchanged.}
$$\frac{d}{dt}H_t^k\leq \frac{d}{ds}|_{s=0} H\left(P^k[t,t+s]\mid \mu^{\otimes k}[t,t+s]\right).$$

It suffices to compute the right hand side. We do the computation similar to Lemma 4.6 of \cite{lacker2021hierarchies}, and obtain: (Note in the second line we only condition on  $X_t^1,\cdots,X_t^k$ rather than $(X^1,\cdots,X^k)_{[0,T]}$),
\begin{equation}\label{aligned}\begin{aligned}\frac{d}{dt}H_t^k\leq& \frac{k}{(n-1)^2}\mathbb{E}\left[\left|\sum_{j=2}^k \left(K(X_t^1-X_t^j)-\langle \mu_t,K(X_t^1-\cdot)\right)\right|^2\right]\\&\quad +\frac{k(n-k)^2}{(n-1)^2}\mathbb{E}\left[\left|\mathbb{E}[K(X_t^1-X_t^n)\mid X_t^1,\cdots,X_t^k]-\langle \mu_t,K(X_t^1-\cdot)\right|\right] \end{aligned}.
\end{equation}

Now we plug in inequality \eqref{squareintegrability} and find
\begin{equation}
    \frac{d}{dt}H_t^k\leq \frac{k(k-1)^2}{(n-1)^2}M+k\mathbb{E}\left[\
    \left|\mathbb{E}[K(X_t^1-X_t^n)|X_t^1,\cdots,X_t^k]-\langle\mu_t,K(X_t^1-\cdot\rangle \right|^2
    \right].
\end{equation}
(Recall that we denote by $\mu$ the density of McKean-Vlasov SDE and by $\bar{\rho}$ its density), then plug in inequality \eqref{transporttype}, we have
\begin{equation}
\frac{d}{dt}H_t^k\leq \frac{k(k-1)^2}{(n-1)^2}M+\gamma k\mathbb{E}\left[ H\left(\rho_N^{X_t^1,\cdots,X_t^k}(t,\cdot)\mid\bar{\rho}(t,\cdot)\right)
\right].
\end{equation}

The chain rule of relative entropy implies
$$\mathbb{E}\left[ H\left(\rho_N^{X_t^1,\cdots,X_t^k}(t,\cdot)\mid\bar{\rho}(t,\cdot)\right)
\right]=H_t^{k+1}-H_t^k.$$

Therefore we only need to solve 
\begin{equation}\label{differentialinequality12}
    \frac{d}{dt}H_t^k\leq \frac{k(k-1)^2}{(n-1)^2}M+\gamma k(H_t^{k+1}-H_t^k).
\end{equation}

Taking $k=n$ in \eqref{aligned}, we also get 
$$H_t^n\leq H_0^n+\frac{1}{2}nMt.$$

We now have the same set of differential inequalities as in the proof of Theorem 2.2 in \cite{lacker2021hierarchies}. Following the computations in that paper, and plug in the chaotic initial condition \ref{condition4} of Theorem \ref{integrablccase}, we obtain, if $n\geq 6e^{\gamma T}$, then for $k=1,\cdots,n$,
\begin{equation}\label{finalinequality}
    H_t^k\leq 2C\frac{k^2}{n^2}+C\exp(-2n(e^{-\gamma T}-\frac{k}{n})_+^2),
\end{equation}
the constant $C:=8(C_0+(1+\gamma)MT)e^{6\gamma T}.$

\subsection{Relaxing regularity of the density function }\label{relaxedregularity}

Now we discuss how to remove the Assumption \ref{smoothness}. This is suggested in Section 3.5 of \cite{guillin2021uniform}.  we may first work with the mollified version of \eqref{densityrhon}. We find $(\xi_\epsilon)_{\epsilon\geq 0}$ a sequence of mollifiers with $\|\xi_\epsilon\|_{L^1}=1$ with support strictly contained within $[-\frac{1}{2},\frac{1}{2}]^d$. Consider the mollified interaction $K_\epsilon:=K*\xi_\epsilon$, and consider $\rho_n^\epsilon$ the unique solution of the Fokker-Planck PDE with smooth coefficients 
$$\partial_t\rho_n^\epsilon+\frac{1}{n}\sum_{i,j=1}^n K^\epsilon(x_i-x_j)\cdot\nabla_{x_i}\rho_n^\epsilon=\sum_{i=1}^n\Delta_{x_i}\rho_n^\epsilon.$$

Then using the condition $\nabla\cdot K=0$, the following is immediate:

\begin{proposition}[Lemma 7 in \cite{guillin2021uniform}]
Let $\rho_n(0)\in\mathcal{C}_\lambda^\infty(\mathbb{T}^{dn})$. Then for all $t\geq 0$ and $\epsilon>0$, $\rho_n^\epsilon(t)\in\mathcal{C}_\lambda^\infty(\mathbb{T}^{dn})$.
\end{proposition}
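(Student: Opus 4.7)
The plan is to exploit $\nabla\cdot K=0$ to convert the mollified Fokker--Planck equation into a transport-diffusion equation whose drift is itself divergence-free, and then appeal to the parabolic maximum and minimum principles on the compact manifold $\mathbb{T}^{dn}$ to propagate the sandwich bound $\tfrac{1}{\lambda}\leq \rho_n^\epsilon(0,\cdot)\leq \lambda$ forward in time. Since $K\in L^p(\mathbb{T}^d)$ and $\xi_\epsilon\in\mathcal{C}_c^\infty$, the mollification $K^\epsilon:=K*\xi_\epsilon$ is a bounded smooth vector field with $\nabla\cdot K^\epsilon=(\nabla\cdot K)*\xi_\epsilon=0$ in the classical sense; consequently the aggregate drift $b_i^\epsilon(x):=\frac{1}{n}\sum_{j=1}^n K^\epsilon(x_i-x_j)$ on $\mathbb{T}^{dn}$ is smooth and satisfies $\nabla_{x_i}\cdot b_i^\epsilon\equiv 0$ identically. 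With smooth bounded coefficients and a non-degenerate Laplacian principal part, classical parabolic theory produces a unique classical solution $\rho_n^\epsilon\in\mathcal{C}^\infty((0,\infty)\times\mathbb{T}^{dn})$ with the prescribed smooth initial trace, so the $\mathcal{C}^\infty$ half of the conclusion is immediate.

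For the uniform bounds I would rewrite the PDE in the equivalent (thanks to $\nabla_{x_i}\cdot b_i^\epsilon=0$) non-conservative form
$$\partial_t\rho_n^\epsilon=\sum_{i=1}^n\Delta_{x_i}\rho_n^\epsilon-\sum_{i=1}^n b_i^\epsilon\cdot\nabla_{x_i}\rho_n^\epsilon,$$
and apply the parabolic maximum principle on $\mathbb{T}^{dn}$. Let $M(s):=\max_{x\in\mathbb{T}^{dn}}\rho_n^\epsilon(s,x)$, attained at some $x_*(s)$; at $x_*(s)$ we have $\nabla_x\rho_n^\epsilon=0$ and $\Delta_{x_i}\rho_n^\epsilon\leq 0$ for every $i$, so plugging into the displayed PDE annihilates the drift term and yields $\partial_t\rho_n^\epsilon(s,x_*(s))\leq 0$. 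A standard Hamilton-type comparison (examining $\rho_n^\epsilon(s,x)-\delta s$ at its first crossing time and letting $\delta\downarrow 0$) then upgrades this to $M(t)\leq M(0)\leq \lambda$ for every $t\geq 0$. The symmetric argument applied to $m(s):=\min_x\rho_n^\epsilon(s,x)$ yields $m(t)\geq m(0)\geq 1/\lambda$.

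The only real obstacle is the rigorous passage from the pointwise sign of $\partial_t\rho_n^\epsilon$ at the moving extremum to monotonicity of $M(\cdot)$ and $m(\cdot)$; on a compact smooth manifold with a classical solution this is routine, via either the auxiliary-function trick above or Hamilton's maximum principle. What is essential is that the argument \emph{fails} without $\nabla\cdot K=0$: the conservative and non-conservative forms of the PDE would then differ by an indefinite-sign $(\nabla_{x_i}\cdot b_i^\epsilon)\rho_n^\epsilon$ term that could drive $M(\cdot)$ upward or $m(\cdot)$ downward. No other part of Assumption \ref{assumption1} enters into this step, confirming that the divergence-free hypothesis is precisely the source of global $L^\infty$ stability for the mollified density.
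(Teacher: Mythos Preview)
Your argument is correct and is exactly the approach the paper has in mind: the paper gives no detailed proof but merely remarks that the result is immediate from $\nabla\cdot K=0$ and cites Lemma 7 of \cite{guillin2021uniform}, and indeed the mollified equation is already written there in the non-conservative transport form you derive, after which the parabolic maximum/minimum principle on the torus yields the two-sided bound. Your identification of the divergence-free condition as precisely what kills the zeroth-order term and makes the comparison principle go through is the essential point.
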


We also consider $\bar{\rho}^\epsilon$ the density function of the McKean-Vlasov SDE \eqref{thm1mckeanvlasov}, with the interaction $K$ replaced by $K^\epsilon$.

Denote by $$H_t^{k,\epsilon}=H\left(\rho_n^{(k,\epsilon)}(t,\cdot)\mid \bar{\rho}^{\epsilon\otimes k}(\cdot)\right),$$
where $\rho_n^{(k,\epsilon)}$ is the $k$-th marginal density of $\rho_n^\epsilon$. Then by the computations leading to equation \eqref{finalinequality}, we have 
\begin{equation}
    H_t^{k,\epsilon}\leq 2C\frac{k^2}{n^2}+C\exp(-2n(e^{-\gamma T}-\frac{k}{n})_+^2),
\end{equation}
and the estimate is uniform in $\epsilon>0$. This is because in the estimates only the $L^p$ norms of $K_\epsilon$ is involved.

Now we take a limit of $\rho_n^\epsilon$. Since $\rho_n^\epsilon$ is uniformly bounded, we find a weakly-* converging subsequence in $L^\infty(\mathbb{R}_+\times\mathbb{T}^{nd})$, such that $\rho_n^\epsilon$ converges to $\rho_n$ weakly-*. Then we can check (see the proof after Lemma 7 of \cite{guillin2021uniform}) that $\rho_n$ is a weak solution of \eqref{densityrhon}. We clearly have $\frac{1}{\lambda}\leq \rho_n\leq \lambda$ by the weak-* convergence.

The proof of the theorem concludes once we show $\lim_{\epsilon\to 0}H_t^{k,\epsilon}\leq H_t^k$, which follows immediately from the lower semi-continuity of relative entropy. This completes the proof of Theorem \ref{integrablccase}.

\section{Interaction of Linear growth}
\label{section7}
This Section is devoted to the proof of Theorem \ref{theorem01}.

\subsection{An exponential moment estimate}\label{section3.1} The estimate in this section is proved in \cite{lacker2021hierarchies}, see also \cite{han2022solving}. We recover the argument here for the reader's convenience.

Consider the McKean-Vlasov SDE
$$dX_t=\left(b_0(t,X)+\langle b\left(t,X,\cdot\right),\mu\rangle\right) dt+dW_t,\quad \operatorname{Law}(X)=\mu.$$
 Take expectations on both sides and apply the linear growth condition, for $t\in[0,T]$,
$$\mathbb{E}\|X\|_t\leq |X_0|+KT+K\int_0^t \left(\|X\|_s+\mathbb{E}\|X\|_s\right)ds+\|W\|_t.$$

We take expectation on both sides, then Gronwall's inequality yields, under the assumption $\mathbb{E}\|X\|_T<\infty$ (there is a unique weak solution with $\mathbb{E}\|X\|_T<\infty$, see \cite{han2022solving}), that $$\mathbb{E}\|X\|_T\leq e^{2KT}\left(\mathbb{E}|X_0|+4dT\right).$$

Having bounded $\mathbb{E}\|X\|_T$, we apply Gronwall again and obtain 
$$\|X\|_T\leq e^{kT}\left(\|X_0|+KT+KT\mathbb{E}\|X\|_T+\|W\|_T\right).$$

Assume the initial law $\mu_0$ satisfies, for some $c_0>0$ and $C_0<\infty$, $\int_{\mathbb{R}^d}e^{c_0|x|^2}\mu_0(dx)<C_0$, then by Fernique's theorem, there exists $c>0$ and $C<\infty$ such that \begin{equation}\label{expoestimate}\mathbb{E}e^{c\|X\|_T^2}<C<\infty.\end{equation}
The constants $c$ and $C$ depend only on $T$, $b_0$, $b$, $c_0$ and $C_0$.

\subsection{A brief review of concentration inequalities}
Materials in this subsection are standard and can be found for example in  \cite{Boucheron2013ConcentrationI} and  \cite{Cunha13}.

\begin{proposition}\label{app4}
For a centered random variable $X$, the following statements are equivalent:

(1) Laplace transform condition: $\quad \exists b>0, \quad \forall t \in \mathbb{R}, \quad \mathbb{E} e^{t X} \leq e^{b^{2} t^{2} / 2}$;

(2) subgaussian tail estimate: $\exists c>0, \quad \forall \lambda>0, \quad \mathbb{P}(|X| \geq \lambda) \leq 2 e^{-c \lambda^{2}}$;

(3) $\psi_{2}$-condition: $\quad \exists a>0, \quad \mathbb{E} e^{a X^{2}} \leq 2$.
\end{proposition}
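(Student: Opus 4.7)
The plan is to prove the cyclic implications $(1) \Rightarrow (2) \Rightarrow (3) \Rightarrow (1)$, with the understanding that the constants $a$, $b$, $c$ may change along the way; only their existence matters. The implication $(1) \Rightarrow (2)$ is a standard Chernoff bound: for $\lambda, t > 0$,
$$\mathbb{P}(X \geq \lambda) \;\leq\; e^{-t\lambda}\,\mathbb{E} e^{tX} \;\leq\; \exp\!\bigl(-t\lambda + b^{2}t^{2}/2\bigr),$$
and optimizing in $t$ at $t = \lambda/b^{2}$ gives $\mathbb{P}(X \geq \lambda) \leq e^{-\lambda^{2}/(2b^{2})}$. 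Since $-X$ satisfies the same Laplace bound, summing the two tails yields (2) with $c = 1/(2b^{2})$.

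For $(2) \Rightarrow (3)$ I would use the layer-cake formula. For $a > 0$,
$$\mathbb{E} e^{aX^{2}} \;=\; 1 + \int_{0}^{\infty} a e^{au}\,\mathbb{P}(X^{2} > u)\,du \;\leq\; 1 + 2a\int_{0}^{\infty} e^{(a-c)u}\,du \;=\; 1 + \frac{2a}{c-a},$$
provided $a < c$. Choosing $a$ small enough, e.g.\ $a = c/3$, makes the right side at most $2$, so (3) holds.

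For $(3) \Rightarrow (1)$ the basic tool is the AM--GM style bound $tX \leq a X^{2} + t^{2}/(4a)$, which yields
$$\mathbb{E} e^{tX} \;\leq\; e^{t^{2}/(4a)}\,\mathbb{E} e^{aX^{2}} \;\leq\; 2 e^{t^{2}/(4a)}.$$
This already captures the correct Gaussian scaling in $t$, but the prefactor $2$ is incompatible with the normalization $\mathbb{E} e^{0\cdot X} = 1$ and cannot be removed without extra information. This is where centeredness enters: (3) implies $\mathbb{E} X^{2} \leq 2/a$ (expanding $e^{aX^{2}}$), and a second-order Taylor expansion gives $\mathbb{E} e^{tX} = 1 + \tfrac{t^{2}}{2}\mathbb{E} X^{2} + O(t^{3}) \leq e^{C t^{2}}$ for $|t|$ small. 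For $|t|$ bounded away from zero, the global bound $2e^{t^{2}/(4a)}$ is dominated by $e^{b^{2} t^{2}/2}$ once $b$ is enlarged enough to absorb $\log 2$. Patching the two regimes produces (1) with a suitable constant $b$.

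The main obstacle is this final implication, specifically upgrading the weaker estimate $\mathbb{E} e^{tX} \leq 2 e^{t^{2}/(4a)}$ into a genuine centered Gaussian Laplace bound with multiplicative constant $1$. Without the hypothesis $\mathbb{E} X = 0$ the statement would fail at small $t$, so centeredness is used in an essential way at precisely the step where one wants to remove the leading factor $2$.
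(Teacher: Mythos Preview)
Your proof is correct and follows the standard route found in the references the paper cites (Boucheron--Lugosi--Massart and similar). The paper itself does not supply a proof of this proposition; it simply states that the material is standard and points to textbooks, so there is nothing to compare against beyond noting that your cyclic argument $(1)\Rightarrow(2)\Rightarrow(3)\Rightarrow(1)$ is exactly the usual one.

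One minor remark on the final implication: your sketch of $(3)\Rightarrow(1)$ is correct in spirit but the Taylor step as written is a little loose. To make it airtight you can, for instance, use that (3) gives moment bounds $\mathbb{E}|X|^{2k}\leq 2k!/a^{k}$ (from $a^{k}X^{2k}/k!\leq e^{aX^{2}}$), control odd moments by Cauchy--Schwarz, and then bound the full power series $\sum_{k\geq 2}\frac{t^{k}}{k!}\mathbb{E}X^{k}$ directly for $|t|$ below an explicit threshold; the linear term vanishes by centeredness, which is exactly where that hypothesis is used, as you correctly identified. The patching with the large-$|t|$ bound $2e^{t^{2}/(4a)}\leq e^{t^{2}/(2a)}$ for $|t|\geq 2\sqrt{a\log 2}$ then closes the argument cleanly.
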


\begin{proposition}[Hoeffding's Inequality] Let $X_{1}, \ldots, X_{n} \sim X$ be i.i.d. sub-Gaussian random variables with variance proxy $b^{2}$ as given in proposition \ref{app4} (1). Then, for any $\varepsilon \geq 0$ we have
$$
\mathbf{P}\left(\frac{1}{n} \sum_{i=1}^{n} X_{i}-\mathbf{E} X \geq \varepsilon\right) \leq e^{-n \varepsilon^{2} /\left(2 b^{2}\right)}.
$$
\end{proposition}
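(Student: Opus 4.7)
The plan is to apply the classical Cramér--Chernoff method. First I would center the variables: set $Y_i := X_i - \mathbb{E} X$, so the $Y_i$ are i.i.d.\ and centered, and by the hypothesis together with Proposition~\ref{app4}(1) they satisfy $\mathbb{E} e^{t Y_i} \le e^{b^2 t^2 / 2}$ for every $t \in \mathbb{R}$. Writing $S_n := \sum_{i=1}^n Y_i$, the event of interest is $\{S_n \ge n \varepsilon\}$.

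For any $t > 0$, exponential Markov's inequality gives
$$\mathbb{P}(S_n \ge n \varepsilon) \;=\; \mathbb{P}\bigl(e^{t S_n} \ge e^{t n \varepsilon}\bigr) \;\le\; e^{-t n \varepsilon}\, \mathbb{E} e^{t S_n}.$$
Independence of $(Y_i)_{i=1}^n$ factorizes the moment generating function, and each factor is controlled by the subgaussian hypothesis, so
$$\mathbb{E} e^{t S_n} \;=\; \prod_{i=1}^n \mathbb{E} e^{t Y_i} \;\le\; e^{n b^2 t^2 / 2}.$$
Combining the two bounds yields $\mathbb{P}(S_n \ge n\varepsilon) \le \exp\!\bigl(n b^2 t^2 / 2 - t n \varepsilon\bigr)$ for every $t > 0$.

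The last step is to optimize in $t$. The quadratic $b^2 t^2 / 2 - t \varepsilon$ is minimized at $t^\star = \varepsilon / b^2$ with minimum value $-\varepsilon^2 / (2 b^2)$. Substituting back produces the claimed bound $e^{-n \varepsilon^2 / (2 b^2)}$ after dividing the event by $n$. No genuine obstacle is anticipated: the argument is a textbook use of Markov's inequality, independence, and the Laplace transform control. The only point worth double-checking is the interpretation of ``variance proxy $b^2$ as in Proposition~\ref{app4}(1)'' for non-centered $X$; since that proposition is stated for centered variables, the natural (and needed) reading is that $X - \mathbb{E} X$ satisfies the Laplace transform bound with constant $b^2$, which is exactly what the argument consumes.
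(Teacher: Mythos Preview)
Your proof is correct and is exactly the standard Cram\'er--Chernoff argument. The paper does not supply its own proof of this proposition; it is stated as a standard fact with a reference to the textbooks \cite{Boucheron2013ConcentrationI} and \cite{Cunha13}, where this same argument appears.
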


\begin{lemma}\label{lemma6}
Let $X$ be a random variable with $E X=0$. If for some $v>0$,
$$
\boldsymbol{P}\{X>x\} \vee \boldsymbol{P}\{-X>x\} \leq e^{-x^{2} /(2 v)} \quad \text { for all } x>0,
$$
then for every integer $q \geq 1$,
$$
E\left[X^{2 q}\right] \leq 2 q !(2 v)^{q} \leq q !(4 v)^{q}.
$$
\end{lemma}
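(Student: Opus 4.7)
The plan is to reduce the moment bound to a standard Gamma-function computation via the layer-cake formula. Before invoking it I would combine the two one-sided tail bounds by a union bound to obtain $P(|X|>x) \leq 2 e^{-x^{2}/(2v)}$ for all $x>0$; note that the hypothesis gives estimates on $P\{X>x\}$ and $P\{-X>x\}$ separately rather than on $P\{|X|>x\}$ directly, so this step is strictly necessary before proceeding.

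The layer-cake representation then yields
$$E[X^{2q}] \;=\; E[|X|^{2q}] \;=\; \int_{0}^{\infty} 2q\, x^{2q-1}\, P(|X|>x)\, dx \;\leq\; 4q \int_{0}^{\infty} x^{2q-1} e^{-x^{2}/(2v)}\, dx.$$
Next I would change variables via $u = x^{2}/(2v)$, so that $x\, dx = v\, du$ and hence $x^{2q-1} dx = (2vu)^{q-1} v\, du$. The remaining integral reduces to $v(2v)^{q-1}\Gamma(q) = v(2v)^{q-1}(q-1)!$, and multiplying by $4q$ produces exactly $2\, q!\, (2v)^{q}$, which is the first inequality. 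For the second inequality it suffices to observe $2 \cdot 2^{q} = 2^{q+1} \leq 2^{2q} = 4^{q}$ for every integer $q \geq 1$.

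I do not anticipate any real obstacle; this is a textbook moment estimate and the argument is essentially a one-line computation once layer-cake and the change of variables are in place. I would remark in passing that the mean-zero hypothesis plays no role (we only estimate $E[|X|^{2q}]$), so the conclusion in fact holds for any $X$ with the stated two-sided subgaussian tail bound. The purpose of having the final form $q!(4v)^{q}$ in the broader argument is presumably to match a Bernstein-type moment condition, which will in turn be used to recover Laplace-transform estimates in subsequent concentration arguments.
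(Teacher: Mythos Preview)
Your argument is correct and is exactly the standard textbook computation; the paper itself does not supply a proof but simply cites this lemma as standard material from \cite{Boucheron2013ConcentrationI} and \cite{Cunha13}, where the same layer-cake plus Gamma-function argument appears. Your remark that the mean-zero hypothesis is unused is also accurate.
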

\subsection{Uniform upper bound via exponential concentration} 

Notations in this section are very different from the rest of the article. We use these notations to be consistent with \cite{jabir2019rate}, as we will use the main theorem of this paper.

On a probability space $\left(\Omega, \mathcal{F},\left(\mathcal{F}_{t} ; 0 \leq t \leq T\right), \mathbb{P}\right)$ define $n$ independent copies of the McKean-Vlasov SDE with the same initial law $\mu_0$, for $i=1,\cdots,n$:
$$d X_{t}^{i,\infty}=\left(b_{0}(t, X^{i,\infty})+\langle\mu, b(t, X^{i,\infty}, \cdot)\rangle\right) d t+d W_{t}, \quad \mu=\operatorname{Law}(X^{i,\infty}).$$
Define $$\triangle b_{t}^{i,n, \infty}=\left\langle b\left(t,X^{i, \infty},\cdot\right), \bar{\mu}^{n,i, \infty}\right\rangle-\left\langle b\left(t,X^{i, \infty},\cdot \right), \mu\right\rangle$$
where $\bar{\mu}^{n,i, \infty}=\frac{1}{n-1} \sum_{j=1,j\neq i}^{n} \delta_{\left\{X^{j, \infty}\right\}}$.
\begin{lemma}\label{lemma7}
On the time interval $[0,T]$ there exists some $0<\beta<\infty$ such that for any $0<T_0<T<\infty$, any $\delta>0$ and any integer $p\geq 1$, 

\begin{equation}\mathbb{E}_{\mathbb{P}}\left[\left(\int_{T_{0}}^{\left(T_{0}+\delta\right) \wedge T}\left|\triangle b_{t}^{i, n, \infty}\right|^{2} d t\right)^{p}\right] \leq \frac{p ! \beta^{p} \delta^{p}}{n^{p}},\quad i=1,\cdots,n.\end{equation}
\end{lemma}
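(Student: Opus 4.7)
The plan is to condition on $X^{i,\infty}$ and reduce the problem to concentration for an empirical mean of i.i.d.\ centered random vectors, then integrate out the conditional randomness via the exponential moment bound \eqref{expoestimate} from Section \ref{section3.1}. Writing $f_t(y):=b(t,X^{i,\infty},y)-\langle b(t,X^{i,\infty},\cdot),\mu\rangle$, one has $\triangle b_t^{i,n,\infty}=\frac{1}{n-1}\sum_{j\neq i}f_t(X^{j,\infty})$, and conditionally on $X^{i,\infty}$ the summands $f_t(X^{j,\infty})$ are i.i.d.\ with mean zero. My target is to show that $\eta:=\int_{T_{0}}^{(T_0+\delta)\wedge T}|\triangle b_t^{i,n,\infty}|^2\,dt$ is sub-exponential with parameter $O(\beta\delta/n)$; by the standard tail-to-moment conversion for sub-exponential variables this is equivalent to the claimed bound $\mathbb{E}[\eta^p]\leq p!\,\beta^p\delta^p/n^p$.

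The key analytical ingredient is conditional sub-Gaussianity. The linear growth condition \eqref{linear} and the uniform bound $\mathbb{E}\|X\|_T\leq C$ obtained in Section \ref{section3.1} give $|f_t(y)|\leq C_1(1+\|X^{i,\infty}\|_T+\|y\|_T)$, and \eqref{expoestimate} applied to $X^{j,\infty}$ then ensures that, conditional on $X^{i,\infty}$, each $f_t(X^{j,\infty})$ is sub-Gaussian with variance proxy $v(X^{i,\infty})\leq C_2(1+\|X^{i,\infty}\|_T^2)$. Hoeffding's inequality for sums of independent sub-Gaussian random variables lifts this to $\triangle b_t^{i,n,\infty}$ itself: conditional on $X^{i,\infty}$, it is sub-Gaussian with variance proxy $v(X^{i,\infty})/(n-1)$. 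Applying Lemma \ref{lemma6} conditionally together with the Jensen-type bound $\bigl(\int_{T_0}^{T_0+\delta}g_s\,ds\bigr)^p\leq \delta^{p-1}\int_{T_0}^{T_0+\delta}g_s^p\,ds$ for $g_s=|\triangle b_s|^2$, we obtain, before unconditioning,
\begin{equation*}
\mathbb{E}\bigl[\eta^p \mid X^{i,\infty}\bigr] \leq p!\,\delta^p \left(\frac{4v(X^{i,\infty})}{n-1}\right)^{p}.
\end{equation*}

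The main obstacle is unconditioning. Since \eqref{expoestimate} only gives a sub-exponential tail for $\|X^{i,\infty}\|_T^2$, one has $\mathbb{E}[v(X^{i,\infty})^p] \leq \tilde C^p\, p!$, and a naive combination produces the suboptimal $(p!)^2 \beta^p \delta^p /n^p$ instead of $p!\,\beta^p \delta^p/n^p$. To recover the single factorial stated in the lemma, I would follow the approach of \cite{jabir2019rate}: split the tail according to a threshold on $v(X^{i,\infty})$,
\begin{equation*}
\mathbb{P}(\eta > \lambda) \leq \mathbb{P}\Bigl(v(X^{i,\infty}) > \tfrac{(n-1)\lambda}{2C\delta}\Bigr) + \mathbb{E}\Bigl[\mathbb{P}(\eta > \lambda/2 \mid X^{i,\infty})\,\mathbf{1}_{\{v(X^{i,\infty})\leq (n-1)\lambda/(2C\delta)\}}\Bigr],
\end{equation*}
where the first term is already sub-exponential in $\lambda n/\delta$ by \eqref{expoestimate}, while the second is controlled via the conditional exponential moment $\mathbb{E}[e^{\alpha\eta}\mid X^{i,\infty}]\leq (1-2\alpha\delta v(X^{i,\infty})/(n-1))^{-1/2}$ obtained from Jensen's inequality and the Gaussian Laplace transform for sub-Gaussian $|\triangle b_t|^2$. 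Optimizing the threshold in the splitting yields the sub-exponential tail of $\eta$ with parameter $O(\beta\delta/n)$, hence the desired moment bound.
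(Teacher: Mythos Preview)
Your approach differs from the paper's: the paper asserts that the summands $b(t,X^{1,\infty},X^{j,\infty})-\langle b(t,X^{1,\infty},\cdot),\mu\rangle$, $j\ne 1$, are unconditionally i.i.d.\ and applies Hoeffding's inequality and Lemma~\ref{lemma6} directly, without conditioning on $X^{1,\infty}$. That step is not justified as written (the summands all share $X^{1,\infty}$), so your instinct to condition is sound, and your identification of the $(p!)^2$ obstruction is correct.

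The gap is in your final paragraph: the tail-splitting does not upgrade $(p!)^2$ to a single $p!$. With your threshold $M=(n-1)\lambda/(2C\delta)$, on $\{v\le M\}$ the conditional sub-exponential parameter of $\eta$ is $\delta v/(n-1)\le \lambda/(2C)$, so your Laplace transform bound together with Markov gives only $\mathbb{P}(\eta>\lambda\mid X^{i,\infty})\le C'$, a constant independent of $\lambda$; the second term in your decomposition therefore does not decay. If instead you leave $M$ free and balance $e^{-cM}$ against $e^{-c'(n-1)\lambda/(\delta M)}$, you are forced to take $M\sim\sqrt{(n-1)\lambda/\delta}$, which yields only the stretched-exponential tail $\exp\bigl(-c\sqrt{(n-1)\lambda/\delta}\bigr)$---precisely the $(p!)^2$ moment growth you were trying to avoid. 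This is in fact sharp under the bare linear growth hypothesis: in the model $\triangle b_t^{i,j,n}=g(X^{i,\infty})Z_j$ with $g,Z_j$ independent standard Gaussians one has $\eta\overset{d}{=}\frac{\delta}{n-1}\,g^2\chi^2_1$, a product of two independent sub-exponentials, and $\mathbb{E}[\eta^p]\asymp (p!)^2(\delta/n)^p$. So neither your argument nor the paper's establishes the single $p!$ as stated; the bound that actually follows is $(p!)^2\beta^p\delta^p/n^p$, and the downstream series in Proposition~\ref{proposition80} would need to be re-examined accordingly.
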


\begin{proof}

The random variables $$b(t,X^{1,\infty},X^{j,\infty})-\left\langle b\left(t,X^{1, \infty},\cdot \right), \mu\right\rangle,\quad j=2,\cdots,n,$$ are centered i.i.d. random variables under $\mathbb{P}$. Moreover they have squared exponential moments: for some $c>0$,
$$\mathbb{E}_\mathbb{P}e^{c\left\|b(t,X^{1,\infty},X^{j,\infty})-\left\langle b\left(t,X^{1, \infty},\cdot \right), \mu\right\rangle\right\|^2}<C<\infty,$$
thanks to the linear growth condition \eqref{linear} and the exponential moment estimate \eqref{expoestimate}. Therefore by dominated convergence we may choose $a>0$ sufficiently small that $\psi_2$-condition in proposition \ref{app4} is satisfied. We now apply Höeffding's inequality, and then apply Lemma \ref{lemma6} ( with $\frac{1}{n}\sum_{i=1}^n X_i$ in place of $X$ and $\beta^2/n$ in place of $v$) to obtain: there exists some $\beta>0$ depending only on $T$, $b_0$, $b$ and $\mu_0$ such that
\begin{equation}
\mathbb{E}_{\mathbb{P}}\left[\left|\sum_{j=2}^{n}\left(b(t,X^{1,\infty},X^{j,\infty})-\left\langle b\left(t,X^{1, \infty},\cdot \right), \mu\right\rangle\right)\right|^{2 p}\right] \leq p !\left((n-1)\beta \right)^{p}.
\end{equation}

Setting
$$
\triangle b_{t}^{i, j, n}:=b(t,X^{i,\infty},X^{j,\infty})-\left\langle b\left(t,X^{i, \infty},\cdot \right), \mu\right\rangle,
$$
Jensen's inequality yields
$$
\begin{aligned}
&\mathbb{E}_{\mathbb{P}}\left[\left(\int_{T_{0}}^{T_{0}+\delta}\left|\frac{1}{n-1} \sum_{j=1,j\neq i}^{n} \triangle b_{t}^{i, j, n}\right|^{2} d t\right)^{p}\right]\\& \leq \frac{\delta^{p-1}}{(n-1)^{2 p}} \int_{T_{0}}^{T_{0}+\delta} \mathbb{E}_{\mathbb{P}}\left[\left|\sum_{j=1,j\neq i}^{N} \triangle b_{t}^{i, j, n}\right|^{2 p}\right] d t \\
&\leq \frac{\delta^{p} p !\beta^p}{(n-1)^{p}}\leq \frac{\delta^{p} p !(2\beta)^p}{n^{p}}.
\end{aligned}
$$
The proof completes with replacing $\beta$ by $2\beta$.
\end{proof}

From the previous proposition, we can get the following (uniform in $n$) bound. The proof of the next proposition can be found as in Proposition 3.1 of \cite{jabir2019rate}, and we will give a sketch of proof in a more complicated case after the statement of Proposition \ref{proposition8}.

\begin{proposition}\label{proposition80}
For all $0\leq T_0<T_0+\delta\leq T$, $0<\kappa<\infty$, 
\begin{equation}
\sup _{n} \mathbb{E}_{\mathbb{P}}\left[\exp \left\{\kappa \sum_{i=1}^{n} \int_{T_{0}}^{T_{0}+\delta} \triangle b_{t}^{i, n,\infty} \cdot d W_{t}^{i}-\frac{\kappa}{2}\sum_{i=1}^n \int_{T_{0}}^{T_{0}+\delta}\left|\triangle b_{t}^{i, n,\infty}\right|^{2} d t\right\}\right]
\end{equation}
is bounded from above by  $e^{\kappa}+2C+C'$ provided that   $\delta<(16 \max\{\kappa^2,1\} \beta)^{-1}$. Here $\beta$ is given in lemma \ref{lemma7}, $C$ and $C'$ two universal constants.
\end{proposition}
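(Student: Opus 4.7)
The plan is to apply Cauchy–Schwarz to decompose the ``Girsanov weight'' into a true exponential martingale and a purely additive remainder involving only the quadratic variation, and then to bound each piece separately with Lemma \ref{lemma7} as the key uniform-in-$n$ input. Write $M_{t}:=\sum_{i=1}^{n}\int_{T_{0}}^{t}\triangle b_{s}^{i,n,\infty}\cdot dW_{s}^{i}$ with quadratic variation $\langle M\rangle_{t}=\sum_{i=1}^{n}\int_{T_{0}}^{t}|\triangle b_{s}^{i,n,\infty}|^{2}ds$, and abbreviate $A:=\langle M\rangle_{T_{0}+\delta}$. The algebraic identity
$$\kappa M_{T_{0}+\delta}-\tfrac{\kappa}{2}A=\tfrac{1}{2}\bigl(2\kappa M_{T_{0}+\delta}-2\kappa^{2}A\bigr)+\bigl(\kappa^{2}-\tfrac{\kappa}{2}\bigr)A,$$
followed by Cauchy–Schwarz, yields
$$\mathbb{E}\bigl[\exp(\kappa M_{T_{0}+\delta}-\tfrac{\kappa}{2}A)\bigr]\leq \mathbb{E}\bigl[\mathcal{E}(2\kappa M)_{T_{0}+\delta}\bigr]^{1/2}\,\mathbb{E}\bigl[\exp((2\kappa^{2}-\kappa)A)\bigr]^{1/2},$$
where $\mathcal{E}(2\kappa M)_{t}=\exp(2\kappa M_{t}-2\kappa^{2}\langle M\rangle_{t})$ is the Doléans–Dade exponential.

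Next I would control the moments of $A$ uniformly in $n$. Exchangeability of the $n$ independent copies together with Jensen's inequality $(\sum_{i=1}^{n}X_{i})^{p}\leq n^{p-1}\sum_{i=1}^{n}X_{i}^{p}$ and the per-particle estimate of Lemma \ref{lemma7} combine to give $\mathbb{E}[A^{p}]\leq p!(\beta\delta)^{p}$ for every $p\geq 1$. Expanding in a power series then yields
$$\mathbb{E}[\exp(\lambda A)]=\sum_{p\geq 0}\frac{\lambda^{p}\mathbb{E}[A^{p}]}{p!}\leq \sum_{p\geq 0}(\lambda\beta\delta)^{p}=\frac{1}{1-\lambda\beta\delta}\qquad\text{whenever }\lambda\beta\delta<1.$$
The smallness assumption $\delta<(16\max\{\kappa^{2},1\}\beta)^{-1}$ is tailored so that $\lambda\beta\delta\leq 1/8$ for both $\lambda=2\kappa^{2}$ and $\lambda=2\kappa^{2}-\kappa$ (and if $2\kappa^{2}-\kappa<0$ the second factor is trivially bounded by $1$). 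In particular $\mathbb{E}[\exp(2\kappa^{2}A)]<\infty$ uniformly in $n$ is precisely Novikov's criterion, so $\mathcal{E}(2\kappa M)$ is a true martingale on $[T_{0},T_{0}+\delta]$ with expectation $1$. Substituting both estimates back into the Cauchy–Schwarz bound produces a universal upper bound independent of $n$; keeping track of the numerical constants (and, if necessary, splitting the expectation on the event $\{\exp(\kappa M-\tfrac{\kappa}{2}A)\leq e^{\kappa}\}$) yields the advertised form $e^{\kappa}+2C+C'$.

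I expect the main obstacle to be the uniform-in-$n$ moment bound on $A$. It is essential that Lemma \ref{lemma7} provides the factor $n^{-p}$ in the per-particle estimate $\mathbb{E}[(\int_{T_{0}}^{T_{0}+\delta}|\triangle b_{t}^{i,n,\infty}|^{2}dt)^{p}]\leq p!\beta^{p}\delta^{p}/n^{p}$; without this factor the Jensen step $n^{p-1}\cdot n\cdot(\cdots)$ would blow up with $n$ and the whole scheme would collapse. The $n^{-p}$ decay ultimately originates from Hoeffding's inequality applied to the centered i.i.d.\ random variables $b(t,X^{1,\infty},X^{j,\infty})-\langle b(t,X^{1,\infty},\cdot),\mu\rangle$, whose sub-Gaussianity is built on the linear growth condition \eqref{linear} together with the exponential moment estimate \eqref{expoestimate}. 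Once the moment bound on $A$ is in hand, the rest is a routine combination of Cauchy–Schwarz, Novikov's theorem, and series summation.
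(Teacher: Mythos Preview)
Your approach is correct and it is genuinely different from the route the paper (following Jabir) takes. Both arguments rest on the same uniform moment estimate $\mathbb{E}[A^{p}]\le p!(\beta\delta)^{p}$, obtained exactly as you describe by Jensen, exchangeability, and the $n^{-p}$ factor from Lemma~\ref{lemma7}. The difference lies in how this is converted into an exponential bound. The paper drops the $-\tfrac{\kappa}{2}A$ term (it is nonpositive), Taylor-expands $\mathbb{E}[e^{\kappa M}]$, handles odd powers via $r^{2p+1}\le 1+r^{2p+2}$ (this is where the $e^{\kappa}$ appears), and then invokes the optimal BDG inequality $\mathbb{E}[M^{2p}]\le 2^{2p}(2p)^{p}\mathbb{E}[A^{p}]$ to close the series. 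Your Cauchy--Schwarz/Novikov route is cleaner: it sidesteps BDG and the odd--even splitting entirely, and the series $\sum(\lambda\beta\delta)^{p}$ is simpler to sum than the paper's $\sum p!\,p^{p}2^{3p}(\beta\delta)^{p}\kappa^{2p}/(2p)!$. What you lose is the literal form $e^{\kappa}+2C+C'$ of the bound: your estimate gives directly something like $(1-(2\kappa^{2}-\kappa)\beta\delta)^{-1/2}\le(8/7)^{1/2}$, which is of course a perfectly good universal constant but does not display the $e^{\kappa}$ term. Your parenthetical about splitting on the event $\{\exp(\kappa M-\tfrac{\kappa}{2}A)\le e^{\kappa}\}$ is unnecessary for correctness and can be dropped; the precise shape of the constant is inessential to how the proposition is used downstream.
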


Denote by $$Z_t^n:=\exp\left\{\sum_{i=1}^n\int_0^t \triangle b_s^{i,n,\infty}\cdot dW_s^i-\frac{1}{2}\sum_{i=1}^n \int_0^t \left|\triangle b_s^{i,n,\infty}\right|^2ds\right\},$$

Then via Cauchy-Schwartz inequality, Proposition \ref{proposition80} implies that

\begin{equation}
\label{exponentialboundsuni2016}
\mathbb{E}_\mathbb{P}\left[(Z_t^n)^\kappa\right]<M_\kappa<\infty\quad \text{ for all }n\geq 1, \kappa>0, t\in [0,T_\kappa],\end{equation} with the upper bound $M_\kappa$ and the terminal time $T_\kappa$ depending only on ($\kappa$, $b_0$, $b$, the initial law $\mu_0$), and is uniform in the number of particles $n$. \footnote{\label{footnote123}At this point, one may be tempted to apply Theorem 2.1 of \cite{jabir2019rate}, which gives $O((k/n)^{1/2})$ rate in total variation on any finite time interval $[0,T]$. However, though passing from equation (3.3) to (3.4) in that paper is correct, passing from (3.8) to (3.9) is not because particles labeled $1$ to $k$ are not independent from the Brownian motions labeled $k+1$ to $N$, as there is an interaction between them after time $m\delta$. Thus we only get propagation of chaos results on $[0,T^*]$ with $T^*$ predetermined by parameters of the system.}

By definition of relative entropy, we have
\begin{equation}
    H\left(P^{(n,n)}[t]\mid \mu^{\otimes n}[t]\right)=\mathbb{E}_\mathbb{P}[Z_t^n\log Z_t^n].
\end{equation}
Using inequality \eqref{exponentialboundsuni2016}, that $Z_t^n$ is non-negative, and the elementary inequality $x\log x\leq x^\kappa$ for some $\kappa>0$,
we may find some upper bound $M<\infty$ depending only on $b_0,b,\mu_0$, as well as a finite time horizon $[0,T_M]$ with $T_M$ depending on the same set of parameters, such that
\begin{equation}
    H\left(P^{(n,n)}[t]\mid \mu^{\otimes n}[t]\right)<M<\infty\text{ uniformly in }n, \quad t\in[0,T_M].
\end{equation}
By the subadditivity property of relative entropy with respect to product measure (see for example \cite{hauray2014kac}, Lemma 3.3-iv), and exchangeability of the $n$-particle system, we have
\begin{equation} \label{conclusion12016}
     H\left(P^{(n,k)}[t]\mid \mu^{\otimes k}[t]\right)\leq\frac{k}{n} H\left(P^{(n,n)}[t]\mid \mu^{\otimes n}[t]\right)\leq \frac{k}{n}M<\infty,\quad t\in[0,T_M].
\end{equation}

By Pinsker's inequality, this implies
\begin{equation}  \label{conclusion22016}
    \|P^{(n,k)}[t]-\mu^{\otimes k}[t]\|_{\text{TV}}\leq\sqrt{\frac{2k}{n}M}<\infty,\quad t\in[0,T_M].
\end{equation}

\subsection{Weighted Pinsker's inequality}
For probability measures $\nu$ and $\nu'$ on the same measure space, given $f$ a measurable $\mathbb{R}^d$-valued function defined on it, we have the weighted Pinsker's inequality (see \cite{10.1007/BFb0085169} and equation (6.1) of \cite{lacker2021hierarchies}),
\begin{equation}\label{weightedpinsker}
|\langle \nu-\nu',f\rangle|^2\leq 2\left(1+\log \int e^{|f|^2}d\nu'\right)H\left(\nu\mid\nu'\right).\end{equation}

\subsection{Stability analysis of McKean-Vlasov SDEs} 

Fix some $c>0$ and $0<C<\infty$. Consider $\mathcal{P}_{c_0,C}$, the subset of $\mathcal{P}(\mathbb{R}^d)$ consisting of all probability measures $\mu$ such that 
$$\int_{\mathbb{R}^d}e^{c_0|x|^2}\mu(dx)<C<\infty.$$

Fix two measures $\mu^1$ and $\mu^2$ in $\mathcal{P}_{c_0,C}$.
Denote by $\mu^1[T]$ and $\mu^2[T]$ the laws of the McKean-Vlasov SDE on $\mathcal{C}_T^d$ with initial distribution respectively $\mu^1$ and $\mu^2$. Via Girsanov transform, we compute the relative entropy between $\mu^1[T]$ and $\mu^2[T]$:
$$H\left(\mu^1[T]\mid\mu^2[T]\right)\leq H\left(\mu^1\mid\mu^2\right)+\frac{1}{2}\int_0^T \left|\left\langle b(t,x,\cdot),\mu^1[t]-\mu^2[t]\right\rangle\right|^2 dt\mu^1[T](dx).$$

The weighted Pinsker's inequality \eqref{weightedpinsker} implies that, choosing $\epsilon>0$ sufficiently small, 
$$\left|\left\langle b(t,x,\cdot,\mu^1[t]-\mu^2[t]\right\rangle\right|^2 \mu^1[T](dx)\leq C_\epsilon H\left(\mu^1[t]\mid\mu^2[t]\right).$$
where 
$$\begin{aligned}&C_\epsilon=2\epsilon^{-1} \left(1+\log\left(\int_{\mathcal{C}_T^d}e^{\epsilon |b(t,x,y)|^2}\mu^2[t](dy)\right)\mu^1[t](dx)\right)\\&\quad\leq 2\epsilon^{-1}\int_{\mathcal{C}_T^d}e^{\epsilon |b(t,x,y)|^2}\mu^1[t](dx)\mu^2[t](dy).\end{aligned}$$
When $\epsilon>0$ is sufficiently small but fixed (we are not letting $\epsilon\searrow 0$), this expression is finite by the linear growth condition, Cauchy-Schwartz, and the exponential moment estimate \eqref{expoestimate}. Moreover, the constant $C_\epsilon$ can be bounded uniformly in the choice of $\mu^1$ and $\mu^2$.

Then, by Gronwall's lemma, we conclude there exists some constant $K>0$ depending on $T$, $b_0$, $b$, $c_0$ and $C$, such that 
$$H\left(\mu^1[T]\mid\mu^2[T]\right)\leq e^{KT}H\left(\mu^1\mid\mu^2\right).$$

We have proved the desired stability result.

\section{The case of fractional Brownian driving noise}
\label{section44}

\subsection{A note on fractional Brownian motion} We will only present the most relevant information on fractional Brownian motion $B^H$ for $H\in(0,1)$. We mention \cite{galeati2021distribution} and \cite{galeati2021noiseless} for a comprehensive discussion of fractional Brownian motion driven SDEs.

Let $B^{H}=\left\{B_{t}^{H}, t \in[0, T]\right\}$ be a $d$-dimensional fractional Brownian motion with Hurst parameter $H \in(0,1)$. That is, $B^H$ is a centered Gaussian process with covariance
$$
R_{H}(t, s)=E\left(B_{t}^{H}\otimes  B_{s}^{H}\right)=\frac{1}{2}\left\{|t|^{2 H}+|s|^{2 H}-|t-s|^{2 H}\right\} I_d \text{ for all }s,t\geq 0.
$$
The Girsanov transform of fractional Brownian motion will be used in an essential way.

\subsubsection{Girsanov Transform}

Given a fractional Brownian motion $B^H$ on a probability space, by classical results of fractional calculus, we may construct a Brownian motion $W$ on the same probability space that satisfies
$$B_t^H=\int_0^t K_H(t,s)dW_s,$$where $K_H$ is the  Volterra integral kernel given in \cite{NUALART2002103}.

Corresponding to this kernel $K_H(t,s)$ is a functional $K_H:L^2([0,T])\to I_{0^+}^ {H+\frac{1}{2}}(L^2([0,T])$, where $I_{0^+}^ {H+\frac{1}{2}}(L^2([0,T])$ is the image of $L^2([0,T]$ under the map
$$I_{0^+}^\alpha f(x)=\frac{1}{\Gamma(\alpha)}\int_0^x (x-y)^{\alpha-1}f(y)dy.$$

The precise expression of $K_H$ is given in \cite{NUALART2002103}. We will however work with its inverse $K_H^{-1}:I_{0^+}^ {H+\frac{1}{2}}(L^2([0,T]))\to L^2([0,T]).$ For $H\in(\frac{1}{2},1)$ an easy-to-use upper bound of $K_H^{-1}$ is given in \eqref{inversenormestimate}, and for $H\in(0,\frac{1}{2})$ an easy-to-use upper bound of $K_H^{-1}$ is given in \eqref{singfracbound}. 

The reason for introducing $K_H^{-1}$ is it will appear in an essential way in the Girsanov transform for fractional Brownian motions. In the particular case $H=\frac{1}{2}$, $K_H^{-1}f=f'$ for $f$ absolutely continuous, and the following Proposition reduces to the standard Girsanov transform for Brownian motion.

\begin{proposition}[ \cite{NUALART2002103}, Theorem 2]
\label{thm1}
Consider the shifted process $\tilde{B}_{t}^{H}=B_{t}^{H}+\int_{0}^{t} u_{s} \mathrm{~d} s$ defined by a process $u=\left\{u_{t}, t \in[0, T]\right\}$ with integrable trajectories. Assume that

(i) $\int_{0}^{\cdot} u_{s} \mathrm{~d} s\in I_{0^+}^ {H+\frac{1}{2}}(L^2([0,T])) $,

(ii) $E\left(\xi_{T}\right)=1$, where
$$
\xi_{T}=\exp \left(-\int_{0}^{T}\left(K_{H}^{-1} \int_{0}^{\cdot} u_{r} \mathrm{~d} r\right)(s) \mathrm{d} W_{s}-\frac{1}{2} \int_{0}^{T}\left(K_{H}^{-1} \int_{0}^{\cdot} u_{r} \mathrm{~d} r\right)^{2}(s) \mathrm{d} s\right).
$$

Then the shifted process $\tilde{B}^{H}$ is an $\mathscr{F}_{t}^{B^{H}}$-fractional Brownian motion with Hurst parameter $H$ under the new probability $\tilde{P}$ defined by $\mathrm{d} \tilde{P} / \mathrm{d} P=\xi_{T}$.
\end{proposition}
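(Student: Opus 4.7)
The plan is to reduce this result to the classical Cameron--Martin--Girsanov theorem for the driving Brownian motion $W$, using the Volterra representation $B^H_t = \int_0^t K_H(t,s)\,dW_s$ that relates $B^H$ to a standard Brownian motion $W$ on the same probability space.

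First, I would use assumption (i) to define $v := K_H^{-1}\bigl(\int_0^\cdot u_r\,dr\bigr) \in L^2([0,T])$, so that $\int_0^t u_r\,dr = \int_0^t K_H(t,s)\,v(s)\,ds$. With this identification, the density $\xi_T$ in the statement takes the classical Girsanov form
$$\xi_T = \exp\Bigl(-\int_0^T v(s)\,dW_s - \tfrac{1}{2}\int_0^T v(s)^2\,ds\Bigr),$$
and assumption (ii), $E[\xi_T] = 1$, is precisely the condition needed to apply the classical Girsanov theorem to $W$: $\tilde P := \xi_T \cdot P$ is a probability measure and $\tilde W_t := W_t + \int_0^t v(s)\,ds$ is a standard Brownian motion on $[0,T]$ under $\tilde P$.

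Next, I would set $\tilde B^H_t := \int_0^t K_H(t,s)\,d\tilde W_s$ as a Wiener integral against $\tilde W$. Because $\tilde W$ is a $\tilde P$-Brownian motion and $K_H(t,\cdot)$ is deterministic and lies in $L^2([0,t])$, the process $\tilde B^H$ is a centered Gaussian process under $\tilde P$ with the same covariance $R_H$ as $B^H$, hence an $\mathcal{F}_t^{B^H}$-fractional Brownian motion with Hurst parameter $H$. Expanding yields
$$\tilde B^H_t = \int_0^t K_H(t,s)\,dW_s + \int_0^t K_H(t,s)\,v(s)\,ds = B^H_t + \int_0^t u_r\,dr,$$
which is exactly the shifted process in the statement, so this shifted process is indeed fractional Brownian motion under $\tilde P$.

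The hardest part will be the functional-analytic bookkeeping with $K_H$ and $K_H^{-1}$: one must verify that $V \mapsto K_H^{-1}V$ is well defined on $I_{0^+}^{H+1/2}(L^2([0,T]))$ and produces a square-integrable function, that the filtrations generated by $W$ and by $B^H$ coincide so that $v$ is adapted and $\tilde W$ retains the right adaptedness under $\tilde P$, and that the pathwise identity $\tilde B^H_t = B^H_t + V(t)$ can be justified by Fubini on the Wiener integral. These are precisely the compatibility issues that the Volterra kernel calculus in Nualart--Ouknine is designed to handle, and they constitute the real technical content of the proof.
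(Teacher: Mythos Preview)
Your proposal is correct and is essentially the standard proof from Nualart--Ouknine. Note, however, that the paper itself does not give a proof of this proposition at all: it is simply quoted as Theorem~2 of \cite{NUALART2002103} and used as a black box, so there is no ``paper's own proof'' to compare against. Your argument---reduce to the classical Girsanov theorem for the underlying Brownian motion $W$ via the Volterra representation $B^H_t=\int_0^t K_H(t,s)\,dW_s$, set $v=K_H^{-1}(\int_0^\cdot u_r\,dr)$, and then transport the shift back through $K_H$---is exactly the proof in the cited reference.
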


\subsection{Exponential moment estimates}

The estimates in this section are almost identical to those in \cite{lacker2021hierarchies} and in Section \ref{section3.1}. We need only replace the Brownian motion $W$ by the fractional Brownian motion $B^H$.

For a fractional Brownian motion $B^H$, we learn from Fernique's theorem that $$\mathbb{E}\|B^H\|_T\leq C_T \text{ for some } C_T>0,\quad  \mathbb{E}e^{c\|B^H\|^2_T}<\infty\text{ for some }c>0.$$
Consequently, the estimates in Section
\ref{section3.1} carry over to the McKean-Vlasov SDE
$$dX_t=\left(b_0(t,X)+\langle b\left(t,X,\cdot\right),\mu\rangle\right) dt+dB^H_t,\quad \operatorname{Law}(X)=\mu.$$
We have \begin{equation}\label{fracexpoest}\mathbb{E}\|X\|_T\leq e^{2KT}\left(\mathbb{E}|X_0|+C_T\right)
\text{ and   }\mathbb{E}e^{c\|X\|_T^2}<C<\infty \text{ for some }c>0.\end{equation}

\subsection{The regular fractional case}\label{regularcase}

The following estimate of $K_H^{-1}$ in the $H\in(\frac{1}{2},1)$ case is quite useful. See equation (6.2) of \cite{han2022solving}.

For a progressively measurable process $(u_r)_{r\geq 0}$, we have
\begin{equation}\label{inversenormestimate}
\left|K_H^{-1}\left(\int_0^\cdot u_rdr\right)(s)\right|\leq C_H\left(s^{\frac{1}{2}-H}\|u\|_{\infty;[0,s]}+s^\epsilon \|u\|_{\gamma;[0,s]}\right),
\end{equation}
where $\|u\|_{\gamma;[0,s]}$ denotes the $\gamma$-Hölder norm of $u$ on $[0,s]$ and $\|u\|_{\infty;[0,s]}$ denotes the supremum norm of $u$ on $[0,s]$.

\subsubsection{Moment estimates}
On the probability space $\left(\Omega,\mathcal{F},\mathbb{P}\right)$ define $n$ independent processes $$X^{1,\infty},\cdots,X^{n,\infty},$$ such that each $X^{i,\infty}$ solves the Mckean-Vlasov SDE 
$$
d X_{t}^{i,\infty}=\left(b_{0}\left(t,X_{t}^{i,\infty}\right)+\left\langle\mu_{t}, b\left(t,X_{t}^{i,\infty}, \cdot\right)\right\rangle\right) d t+d B_{t}^{i,H}, \quad \mu_{t}=\operatorname{Law}\left(X_{t}^{i,\infty}\right),
$$
where $B^{1,H},\cdots,B^{n,H}$ are $n$ independent $d$-dimensional fractional Brownian motions.

We introduce the notation
\begin{Definition}Define $$\triangle K   _t^{i,n}:=K_H^{-1}\left(\int_0^\cdot\frac{1}{n-1}\sum_{j=1,j\neq i}^n b\left(r,X^{i,\infty}_r,X^{j,\infty}_r\right)-\langle b\left (r,X_r^{i,\infty},\cdot\right),\mu_r\rangle dr\right)(t)$$
\end{Definition}
For $i\neq j$, we have $$\mathbb{E}_\mathbb{P}\left[K_H^{-1}(\int_0^\cdot b(r,X_r^{i,\infty},X_r^{j,\infty})dr)(t)-K_H^{-1}(\int_0^\cdot \langle b(r,X_r^{i,\infty},\cdot),\mu_r\rangle dr)(t)\right]=0.$$

Since $b$ is $\alpha>1-\frac{1}{2H}$-Hölder in its spatial variables, by triangle inequality we have, for any process $X_\cdot$ and $Y_\cdot$ in $\mathcal{C}_T^d$ (see (6.5) of \cite{han2022solving}),
$$\|b(\cdot,X_\cdot,Y_\cdot)\|_{\gamma;[0,T]}\leq_c \|X\|^\alpha_{\frac{\gamma}{\alpha};[0,T]}+\|Y\|^\alpha_{\frac{\gamma}{\alpha};[0,T]},$$ 
where $\leq_c$ means the inequality holds modulo constants depending only on $H$ and $\gamma$.

It is justified in Proposition 6.2 of \cite{han2022solving} that for some constants $K(\lambda)$ depending on $\lambda$, $b_0$, $b$ and $\mu_0$,
$$\mathbb{E}\left[\exp(\lambda\|X_\cdot^{i,\infty}\|_{\frac{\gamma}{\alpha};[0,T]}^{2\alpha})\right]<K(\lambda)<\infty,\quad  i=1,\cdots,n,\text{ for all }\lambda\in\mathbb{R}.$$
Combining the previous two formulas, we have
$$
\mathbb{E}\left[\exp\left(\lambda \left\| b(\cdot,X_{\cdot}^{i,\infty},X_{\cdot}^{j.\infty})\right\|^2_{\gamma;[0,T]}\right)\right]\leqslant C_{H}K_H(\lambda)<\infty \quad \text{for all } \lambda \in \mathbb{R},$$
where $C_H$ depends only on $T$ and $H$, and $K_H(\lambda)$ depends on $b_0$, $b$ and $\mu_0$. 
A similar reasoning gives
$$
 \mathbb{E}\left[\exp\left(\lambda   \left\| t\mapsto \left\langle \mu_t, b\left(t,X_t^{i,\infty},\cdot\right)\right\rangle\right\|^2_{\gamma;[0,T]}\right)\right]\leq C_HK_H(\lambda)<\infty \text{ for all } \lambda\in\mathbb{R}.
$$
For the supremum norm, the linear growth property of $b$ and the moment estimate of $X^{i,\infty}$ implies that for some $c>0$,
$$\mathbb{E}_\mathbb{P}e^{c\left\|b(t,X_t^{1,\infty},X_t^{j,\infty})-\left\langle b\left(t,X_t^{1, \infty},\cdot \right), \mu_t\right\rangle\right\|^2}<C<\infty.$$
If we set $$
\triangle K_t^{i,j,n}:=K_H^{-1}\left(\int_0^\cdot b(r,X_r^{i,\infty},X_r^{j,\infty})dr\right)(t)-K_H^{-1}\left(\int_0^\cdot \left\langle b(r,X_r^{i,\infty},\cdot),\mu_r\right\rangle dr\right)(t),
$$
then using \eqref{inversenormestimate} and combining all the above estimates, we conclude that we can find some $c>0$ and $C_0>0$ such that for $t\in[0,T]$, for $i,j=1,\cdots,n,i\neq j$,
$$\mathbb{E}\left[\exp\left(\frac{c}{t^{1-2H}}\left|\triangle K_t^{i,j,n}\right|^2\right)\right]\leq C_0<\infty.$$

\begin{lemma}
On the time interval $[0,T]$ there exists a $0<\beta<\infty$ such that for any $\delta>0$, any $0<T_0<T_0+\delta\leq T$,  and any integer $p\geq 1$, 
\begin{equation}\mathbb{E}_{\mathbb{P}}\left[\left(\int_{T_{0}}^{\left(T_{0}+\delta\right) \wedge T}\left|\triangle K_{t}^{i,n}\right|^{2} d t\right)^{p}\right] \leq \frac{p ! \beta^{p} \delta^{(2-2H)p}}{n^{p}}.\end{equation}
\end{lemma}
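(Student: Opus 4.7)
The plan is to adapt the argument of Lemma \ref{lemma7} to the fractional setting with $H\in(\frac{1}{2},1)$, with the key new ingredient being Minkowski's integral inequality, which lets us convert the pointwise-in-$t$ $\psi_2$-bound for $\triangle K_t^{i,j,n}$ displayed just above the lemma into the claimed $L^p$ estimate on the time integral while preserving the all-important $p!$ factor.

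First I would write
$$\triangle K_t^{i,n}=\frac{1}{n-1}\sum_{j\neq i}\triangle K_t^{i,j,n}$$
and observe that for fixed $i$ and $t$, the summands are centered and conditionally i.i.d.\ given $X^{i,\infty}$. Combining the displayed $\psi_2$-estimate with the equivalences in Proposition \ref{app4}, each $\triangle K_t^{i,j,n}$ is sub-Gaussian with variance proxy of order $t^{1-2H}$. Following verbatim the Hoeffding / Lemma \ref{lemma6} step used in the proof of Lemma \ref{lemma7}, one obtains the pointwise moment bound
$$\mathbb{E}_\mathbb{P}\left|\triangle K_t^{i,n}\right|^{2p}\leq \frac{p!\,(C t^{1-2H})^p}{n^p},\qquad t\in(0,T],$$
with $C$ depending only on $H,b_0,b,\mu_0,T$.

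Next I would apply Minkowski's integral inequality (valid for $p\geq 1$) together with this pointwise bound:
$$\left(\mathbb{E}_\mathbb{P}\left[\left(\int_{T_0}^{(T_0+\delta)\wedge T}|\triangle K_t^{i,n}|^2\,dt\right)^{p}\right]\right)^{1/p}\leq \int_{T_0}^{T_0+\delta}\bigl(\mathbb{E}|\triangle K_t^{i,n}|^{2p}\bigr)^{1/p}dt\leq \frac{(p!)^{1/p}C}{n}\int_{T_0}^{T_0+\delta}t^{1-2H}\,dt.$$
Since $2-2H\in(0,1)$ for $H\in(\frac{1}{2},1)$, the map $t\mapsto t^{2-2H}$ is concave and hence subadditive, giving $(T_0+\delta)^{2-2H}-T_0^{2-2H}\leq \delta^{2-2H}$ and therefore $\int_{T_0}^{T_0+\delta}t^{1-2H}dt\leq \delta^{2-2H}/(2-2H)$. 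Raising both sides to the $p$-th power yields the claim with $\beta:=C/(2-2H)$.

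The main subtlety is the conditional-versus-unconditional independence issue when applying Hoeffding to pass from the single-summand $\psi_2$-bound to the pointwise moment bound for $\triangle K_t^{i,n}$: the preceding $\psi_2$-estimate is stated unconditionally, whereas the summands are only conditionally i.i.d.\ given $X^{i,\infty}$. As in the proof of Lemma \ref{lemma7}, this is handled either by first promoting the $\psi_2$-bound to a conditional sub-Gaussian bound (absorbing the remaining $X^{i,\infty}$-dependence into the exponential moment \eqref{fracexpoest}) or, equivalently, via a Rosenthal-type inequality for conditionally i.i.d.\ mean-zero sums. The feature specific to the fractional regime is the singularity of $t^{1-2H}$ at $t=0$; its integrable power $2-2H$ is precisely what produces the $\delta^{(2-2H)p}$ decay in the lemma, in place of the $\delta^p$ decay that appears in the Brownian case $H=\frac{1}{2}$.
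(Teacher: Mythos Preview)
Your proof is correct and follows the paper's argument almost verbatim: the same decomposition $\triangle K_t^{i,n}=\frac{1}{n-1}\sum_{j\neq i}\triangle K_t^{i,j,n}$, the same Hoeffding/Lemma~\ref{lemma6} step for the pointwise $2p$-moment, and the same estimate $\int_{T_0}^{T_0+\delta}t^{1-2H}\,dt\leq\delta^{2-2H}/(2-2H)$. The only cosmetic difference is that the paper converts the pointwise moment into the time-integral moment via a weighted Jensen inequality (with weight $t^{1-2H}\,dt$) rather than Minkowski's integral inequality; your flagging of the conditional-versus-unconditional independence subtlety is also accurate, as the paper glosses over this point.
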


\begin{proof}

Observe that 
$$\triangle K_t^{i,n}=\frac{1}{n-1}\sum_{j\neq i}
\triangle K_t^{i,j,n}.
$$

The finite second order exponential moment we just computed allows us to apply Hoöeffding's inequality and Lemma \ref{lemma6} to deduce that there exists some $\beta>0$ depending only on $b$, $b'$ and $T$ such that
\begin{equation}
\mathbb{E}_{\mathbb{P}}
\left[\left|\frac{1}{t^{\frac{1}{2}-H}}\sum_{j=2}^n
\triangle k_t^{1,j,n}
\right|^{2p}\right]\leq p !\left((n-1)\beta \right)^{p}.
\end{equation}

A careful application of Jensen's inequality yields, noticing that $$\int_{T_0}^{T_0+\delta}t^{1-2H}dt\leq \frac{\delta^{2-2H}}{2-2H},$$ and allowing the constant $C_H$ to change in each line,
$$
\begin{aligned}
&\mathbb{E}_{\mathbb{P}}\left[\left(\int_{T_{0}}^{T_{0}+\delta}\left|\frac{1}{n-1} \sum_{j=1,j\neq i}^{n} \triangle K_{t}^{i, j, n}\right|^{2} d t\right)^{p}\right]\\& \leq C_H\frac{\delta^{(2-2H)(p-1)}}{(n-1)^{2 p}} \int_{T_{0}}^{T_{0}+\delta}t^{1-2H} \mathbb{E}_{\mathbb{P}}\left[\left|t^{H-\frac{1}{2}}\sum_{j=1,j\neq i}^{n} \triangle K_{t}^{i, j, n}\right|^{2 p}\right] d t \\
&\leq C_H \frac{\delta^{(2-2H)(p-1)} p !\beta^p}{(n-1)^{p}}\int_{T_0}^{T_0+\delta}t^{1-2H}dt\leq \frac{\delta^{(2-2H)p} p !\beta^p}{(n-1)^{p}}.
\end{aligned}
$$
 
The proof completes with replacing $\beta$ by $2\beta$.
\end{proof}

We obtain the following uniform in $n$ bound, inspired by Proposition 3.1 of \cite{jabir2019rate}:

\begin{proposition} 
\label{proposition8}
For all $0\leq T_0<T_0+\delta\leq T$, $0<\kappa<\infty$, 
\begin{equation}
\sup _{n} \mathbb{E}_{\mathbb{P}}\left[\exp \left\{\kappa \sum_{i=1}^{n} \int_{T_{0}}^{T_{0}+\delta} \Delta K_{t}^{i,n} \cdot d W_{t}^{i}-\frac{\kappa}{2}\sum_{i=1}^n \int_{T_{0}}^{T_{0}+\delta}\left|\Delta K_{t}^{i, n}\right|^{2} d t\right\}\right]
\end{equation}
is bounded from above by a constant $C(\kappa)$ provided that   $\delta<(C\kappa^2\beta)^{-\frac{1}{2-2H}}$. Here $\beta$ is given in lemma \ref{lemma7}, and $C$ is some universal constant.
\end{proposition}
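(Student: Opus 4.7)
The plan is to follow the template of Proposition \ref{proposition80} (and of Proposition 3.1 in \cite{jabir2019rate}), replacing the Brownian scaling $\delta^p$ by the fractional scaling $\delta^{(2-2H)p}$ that is supplied by the preceding lemma. Write $M_t := \sum_{i=1}^n \int_{T_0}^t \Delta K_s^{i,n}\cdot dW_s^i$; since $W^1,\ldots,W^n$ are independent Brownian motions, $\langle M\rangle_t = \sum_{i=1}^n \int_{T_0}^t |\Delta K_s^{i,n}|^2\, ds$, and the target quantity is $\mathbb{E}_{\mathbb{P}}\bigl[\exp(\kappa M_{T_0+\delta} - \tfrac{\kappa}{2}\langle M\rangle_{T_0+\delta})\bigr]$.

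The first step is the algebraic identity
\[
\exp\!\bigl(\kappa M - \tfrac{\kappa}{2}\langle M\rangle\bigr) \;=\; \exp\!\bigl(\kappa M - \kappa^2\langle M\rangle\bigr)\,\exp\!\bigl((\kappa^2 - \tfrac{\kappa}{2})\langle M\rangle\bigr),
\]
followed by Cauchy--Schwartz. The first factor is the square root of the Dol\'eans--Dade exponential $\mathcal{E}(2\kappa M) = \exp(2\kappa M - 2\kappa^2\langle M\rangle)$, which is a nonnegative local martingale and hence a supermartingale, so its expectation is at most $1$. What remains is to bound $\mathbb{E}\bigl[\exp((2\kappa^2 - \kappa)S)\bigr]^{1/2}$ with $S := \langle M\rangle_{T_0+\delta}$, and for this I Taylor-expand the exponential and estimate the moments of $S$.

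The moment estimate proceeds as follows. Writing $S = \sum_{i=1}^n S_i$ with $S_i := \int_{T_0}^{T_0+\delta}|\Delta K_s^{i,n}|^2\,ds \geq 0$, the exchangeability of the i.i.d.\ copies $X^{i,\infty}$ makes the $S_i$ identically distributed, so Minkowski's inequality in $L^m$ yields $\mathbb{E}[S^m]^{1/m} \leq n\,\mathbb{E}[S_1^m]^{1/m}$. The preceding lemma gives $\mathbb{E}[S_1^m]\leq m!\,\beta^m\delta^{(2-2H)m}/n^m$, so upon raising to the $m$-th power the factors of $n$ cancel exactly and
\[
\mathbb{E}[S^m]\;\leq\; m!\,\beta^m\,\delta^{(2-2H)m} \qquad \text{uniformly in } n.
\]
Summing the resulting series gives
\[
\mathbb{E}\bigl[\exp((2\kappa^2 - \kappa)S)\bigr] \;\leq\; \sum_{m\geq 0}\bigl((2\kappa^2 - \kappa)\beta\delta^{2-2H}\bigr)^m \;=\; \frac{1}{1 - (2\kappa^2 - \kappa)\beta\delta^{2-2H}},
\]
which is finite provided $(2\kappa^2-\kappa)\beta\delta^{2-2H} < 1$, i.e.\ $\delta < (C\kappa^2\beta)^{-1/(2-2H)}$ for some absolute $C$. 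Combined with the bound $\leq 1$ for the Dol\'eans factor, this produces a bound depending only on $\kappa$, exactly as claimed.

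The main delicate point is the Minkowski step: the $\Delta K^{i,n}$ are \emph{not} independent across $i$, since each couples all $n$ particles through the empirical average $\tfrac{1}{n-1}\sum_{j\neq i}b(\cdot, X^{i,\infty}, X^{j,\infty})$; one genuinely uses exchangeability of the $(S_i)$, together with the per-particle $L^m$ bound of the lemma, rather than any true independence. The supermartingale step is routine once the $L^2$ integrability of $\Delta K^{i,n}$ supplied by the lemma is in hand, and the constraint on $\delta$ emerges cleanly as the radius of convergence of the above geometric series.
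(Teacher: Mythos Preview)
Your argument is correct, but it is not the route the paper takes. The paper (following Jabir \cite{jabir2019rate}) first drops the negative term $-\tfrac{\kappa}{2}\langle M\rangle$ and Taylor-expands $\exp(\kappa M)$ directly; odd moments are handled via $r^{2p+1}\le 1+r^{2p+2}$, even moments of the stochastic integral are converted to moments of the bracket by the sharp BDG inequality $\mathbb{E}[M^{2p}]\le 2^{2p}(2p)^p\,\mathbb{E}[\langle M\rangle^p]$, and only then is Jensen/exchangeability used to reach the per-particle bound from the lemma. Your approach instead splits via Cauchy--Schwarz into a Dol\'eans--Dade supermartingale factor (bounded by $1$ for free) and a pure quadratic-variation factor, whose exponential moments you control by Minkowski plus the lemma. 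Both routes ultimately feed on the same estimate $\mathbb{E}[(\sum_i S_i)^m]\le n^m\mathbb{E}[S_1^m]\le m!\,\beta^m\delta^{(2-2H)m}$, but yours bypasses BDG and the combinatorial factors $p!\,p^p/(2p)!$, landing on a clean geometric series and a sharper-looking smallness condition on $\delta$. One cosmetic point: your final series bound is written for $2\kappa^2-\kappa\ge 0$; when $\kappa<1/2$ the exponent is nonpositive and the factor is trivially $\le 1$, so the conclusion still holds.
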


The proof of this proposition is very similar to that in \cite{jabir2019rate}, so we will only give a sketch.

Step 1. Using Taylor expansion of the exponential function, it suffices to bound 
$$
\sum_{k \geq 0} \frac{\kappa^{k}}{k !} \mathbb{E}_{\mathbb{P}}\left[\left(\sum_{i=1}^{n} \int_{T_{0}}^{T_{0}+\delta} \triangle K_{t}^{i, n} \cdot d W_{t}^{i}\right)^{k}\right].
$$
Via the elementary inequality $
r^{2 p+1} \leq 1+r^{2 p+2}$, it suffices to upper bound the $k$-th power terms for even numbers $k\in 2\mathbb{N}$.

Step 2. The optimal BDG inequality (see seciton 5 of \cite{jabir2019rate}) implies 

$$
\mathbb{E}_{\mathbb{P}}\left[\left(\sum_{i=1}^{n} \int_{T_{0}}^{T_{0}+\delta} \Delta K_{t}^{i, n} \cdot d W_{t}^{i}\right)^{2 p}\right] \leq 2^{2 p}(2 p)^{p} \mathbb{E}_{\mathbb{P}}\left[\left(\sum_{i=1}^{n} \int_{T_{0}}^{T_{0}+\delta}\left|\triangle K_{t}^{i, n}\right|^{2} d t\right)^{p}\right].
$$

Now use Jensen's inequality, exchangeability of the particle system and Lemma \ref{lemma7}, we obtain an (independent of $n$) upper bound

$$\begin{aligned}
&\mathbb{E}_{\mathbb{P}}\left[\exp \left\{\kappa \sum_{i=1}^{n} \int_{T_{0}}^{T_{0}+\delta} \Delta K_{t}^{i, n} \cdot d W_{t}^{i}\right\}\right]\\&
\leq \exp \kappa+ \sum_{p \geq 0} \frac{p ! p^{p} 2^{3 p} \delta^{(2-2H)p} \beta^{p} \kappa^{2p}}{(2 p) !}\\&+\sum_{p \geq 0} \frac{(p+1) ! (p+1)^{p+1} 2^{3 p+3} \delta^{(2-2H)(p+1)} \beta^{p+1} \kappa^{2p+1}}{(2 p+1) !}.
\end{aligned}
$$

Since $\sup _{p}p ! p^{p} /(2 p) !<\infty$ and $\sup_{p}p ! p^{p} /(2 p-1) !<\infty$, we may choose $\delta$ sufficiently small (depending on $\beta$, $\kappa$ and $H$), then the infinite series is absolutely convergent and we obtain a uniform in $n$-bound.

\subsubsection{Propagation of chaos in short time}
Denote by $$Z_t^n:=\exp\left\{\sum_{i=1}^n\int_0^t \triangle K_s^{i,n}\cdot dW_s^i-\frac{1}{2}\sum_{i=1}^n \int_0^t \left|\triangle K_s^{i,n}\right|^2ds\right\},$$

Then via Cauchy-Schwartz inequality, Proposition \ref{proposition8} implies that

\begin{equation}
\label{exponentialboundsuni}
\mathbb{E}_\mathbb{P}\left[(Z_t^n)^\kappa\right]<M_\kappa<\infty\quad \text{ for all }n\geq 1, \kappa>0, t\in [0,T_\kappa],\end{equation} with the upper bound $M_\kappa$ and the terminal time $T_\kappa$ depending only on ($\kappa$, $H$, $b_0$, $b$, the initial law $\mu_0$), and is uniform in the number of particles $n$.

By definition of relative entropy, we have
\begin{equation}
    H\left(P^{(n,n)}[t]\mid \mu^{\otimes n}[t]\right)=\mathbb{E}_\mathbb{P}[Z_t^n\log Z_t^n].
\end{equation}
Using equation \eqref{exponentialboundsuni}, 
we may find some upper bound $M<\infty$ depending only on $b_0,b,\mu_0$, as well as a finite time horizon $[0,T_M]$ with $T_M$ depending on the same set of parameters, such that
\begin{equation}
    H\left(P^{(n,n)}[t]\mid \mu^{\otimes n}[t]\right)<M<\infty\text{ uniformly in }n, \quad t\in[0,T_M].
\end{equation}
By the subadditivity property of relative entropy with respect to product measure, and exchangeability of the $n$-particle system, we have
\begin{equation} \label{conclusion1}
     H\left(P^{(n,k)}[t]\mid \mu^{\otimes k}[t]\right)\leq\frac{k}{n} H\left(P^{(n,n)}[t]\mid \mu^{\otimes n}[t]\right)\leq \frac{k}{n}M<\infty, \quad t\in[0,T_M].
\end{equation}

By Pinsker's inequality, this implies
\begin{equation}  \label{conclusion2}
    \|P^{(n,k)}[t]-\mu^{\otimes k}[t]\|_{\text{TV}}\leq\sqrt{\frac{2k}{n}M}<\infty,\quad t\in[0,T_M].
\end{equation}

 This is the end of proving Theorem \ref{theorem2}.

\subsection{The singular fractional case}

Now we turn to the case where $b_0$ and $b$ have linear growth \eqref{linear} and the driving noise is fractional Brownian $B^H$ for $H\in(0,\frac{1}{2})$. We will use notations introduced in section \ref{regularcase} for simplicity.

In the case $H\in(0,\frac{1}{2})$, it is very convenient to bound $K_H^{-1}$ (see equation (4.3) of \cite{han2022solving}): there exists a costant $C_H$ depending only on $H$ and $T$ such that, for $h$ absolutely continuous,
\begin{equation}\label{singfracbound}|K_H^{-1}h(s)|\leq C_H\sup_{0\leq r\leq s}|h'(r)|,\quad \text{ for all }s\in[0,T].\end{equation}

An elementary adaptation of the proof of Lemma \ref{lemma7} gives the following

\begin{lemma}
On the time interval $[0,T]$ there exists a $0<\beta<\infty$ such that for any $\delta>0$, any $0<T_0<T_0+\delta\leq T$,  and any integer $p\geq 1$, 

\begin{equation}\mathbb{E}_{\mathbb{P}}\left[\left(\int_{T_{0}}^{\left(T_{0}+\delta\right) \wedge T}\left|\triangle K_{t}^{i,n}\right|^{2} d t\right)^{p}\right] \leq \frac{p ! \beta^{p} \delta^{p}}{n^{p}}.\end{equation}
\end{lemma}

Then we follow the same lines of proof as in Proposition \ref{proposition8} and obtain the following proposition, whose proof is straightforward and omitted:  

\begin{proposition}
For all $0\leq T_0<T_0+\delta\leq T$, $0<\kappa<\infty$, 
\begin{equation}
\sup _{n} \mathbb{E}_{\mathbb{P}}\left[\exp \left\{\kappa \sum_{i=1}^{n} \int_{T_{0}}^{T_{0}+\delta} \triangle K_{t}^{i, n} \cdot d W_{t}^{i}-\frac{\kappa}{2}\sum_{i=1}^n \int_{T_{0}}^{T_{0}+\delta}\left|\triangle K_{t}^{i, n}\right|^{2} d t\right\}\right]
\end{equation}
is bounded from above by  $e^{\kappa}+2C+C'$ provided that   $\delta<(16 \max\{\kappa^2,1\} \beta)^{-1}$. Here $\beta$ is given in lemma \ref{lemma7}, $C$ and $C'$ two universal constants.
\end{proposition}
The rest of the arguments is identical to the $H\in(\frac{1}{2},1)$ case in Section \ref{regularcase} so we omit it. We arrive at the same inequality \eqref{conclusion1} and \eqref{conclusion2}, with constants $M$, $T_M$ depending on $H$, $b_0$, $b$ and $\mu_0$.

We have now given a sketch of proof of Theorem \ref{theorem02}.

\section{Concluding remarks}\label{concluding remarks}
Earlier studies of propagation of chaos for the $n$-particle system \eqref{nparticlesystem} towards its limiting equation \eqref{mckeanvlasov} focus on interactions that are Lipschitz continuous (see for example the monograph \cite{10.1007/BFb0085169}). Examples in mathematical physics and mathematical finance bring about the necessity to work with irregular interactions. We do no justice to summarize major results in this vastly expanding field, but refer to \cite{chaintron2021propagation}
for a contemporary review.

We list a few remaining questions that have not been well settled so far.
\begin{enumerate}
\item Weaken condition \ref{condition2} in Theorem \ref{integrablccase}.

\item The technical assumption $K\in L^p(\mathbb{T}^d)$, $p>d$ is only meant to guarantee \eqref{thm1nparticlesystem} has a unique solution as a stochastic differential equation. Without this, the computations in the proof of Theorem \ref{integrablccase} are still formally true. We hope that the well-posedness of \eqref{thm1nparticlesystem} can be established, even in the case $\|K\|_{L^d(\mathbb{T}^d)}=\infty$.

\item The case of a fractional Brownian driving noise is very difficult to deal with for lack of martingale structure and Markov property. We hope to find alternative proofs of Theorem \ref{theorem02} and \ref{theorem2} that can lead to quantitative propagation of chaos results on any finite time interval $[0,T]$ rather than fixed time interval $[0,T^*]$. In particular, can we generalize Lacker's argument in \cite{lacker2021hierarchies} to the fractional Brownian case?

\item One can study propagation of chaos for diffusion coefficient $\sigma(t,X_t,\mu)$ depending on the density $\mu$, and the interaction $b$ is nonsmooth. Indeed, even the weak uniqueness of the McKean-Vlasov SDE remains open for general $b$, see for example \cite{rockner2021well} and \cite{mishura2020existence}.
\end{enumerate}

\section*{Acknowledgements}
I am grateful to my supervisor, James Norris, for stimulating my interest in interacting particle systems, providing insightful suggestions on the organization of my research paper, and improving the presentation of my writing. I would also like to thank Ioannis Kontoyiannis and Avi Mayorcas for valuable conversations on relative entropy and fractional Brownian motion. 

I would also like to thank Daniel Lacker for pointing out a technical problem in the first draft (now replaced with Theorem \ref{integrablccase} in this paper), and for pointing out some mistakes in the current version. Zimo Hao also provided valuable information on the paper \cite{jabir2019rate}.

Finally, I am very thankful to the anonymous referees who did a very careful reading and provided very instructive suggestions.

\printbibliography
\end{document}